\setlist[enumerate]{leftmargin=.5in}
\setlist[itemize]{leftmargin=.5in}
 \def\R{{\mathbb R}} 
 \def\N{{\mathbb N}}
\newcommand\norm[1]{\left\lVert#1\right\rVert}
\newcommand{\pluseq}{\mathrel{+}=}
\newcommand{\T}[1]{\pmb{\mathcal{#1}}}
\newcommand{\bigOh}[1]{\mathit{O}\!\left( #1 \right)}
\newcommand{\bigOmega}[1]{\Omega\!\left(#1\right)}
\newcommand{\bigTheta}[1]{\Theta\!\left(#1\right)}
\newcommand{\lilOh}[1]{\mathit{o}\paren{#1}}
\newcommand{\size}[1]{\left|#1\right|}
\newcommand{\abs}[1]{\size{#1}}
\newcommand{\paren}[1]{\left(#1\right)}
\newcommand{\set}[1]{\left\{#1\right\}}
\DeclareMathOperator{\Vol}{Vol}
\newcommand{\floor}[1]{\left\lfloor #1 \right\rfloor}
\newcommand{\stirling}[2]{\genfrac{\{}{\}}{0pt}{}{#1}{#2}}
\newcommand{\ttsv}[1][]{\textsc{TTSV#1}}
\newcommand{\fftG}{\textsc{FFT-Gen}}
\newcommand{\subEx}{\textsc{Subset-Gen}}
\newcommand{\multFFT}{\textsc{multFFT}}
\NewDocumentCommand{\Tn}{O{} m}{\boldsymbol{#1\mathscr{\MakeUppercase{#2}}}}
\NewDocumentCommand \X { } {\Tn{X}}
\NewDocumentCommand \A { } {\Tn{A}}
\NewDocumentCommand \LL { } {\Tn{L}} 
\NewDocumentCommand \Vc { O{} m } {{\bm{#1\mathbf{\MakeLowercase{#2}}}}}
\NewDocumentCommand \Mx { O{} m } {{\bm{#1\mathbf{\MakeUppercase{#2}}}}} 
\newcommand{\strike}[1]{}
\crefname{hypothesis}{Hypothesis}{Hypotheses}
\title{Scalable tensor methods for nonuniform hypergraphs\thanks{
Authors listed in alphabetical order. \textbf{PNNL Information Release:} \emph{PNNL-SA-186918}
}}
\author{Sinan G. Aksoy\thanks{Pacific Northwest National Laboratory,~(\email{sinan.aksoy@pnnl.gov}, \email{ilya.amburg@pnnl.gov}, \email{stephen.young@pnnl.gov})}
\and Ilya Amburg\footnotemark[2] \thanks{Corresponding author.}
\and Stephen J. Young\footnotemark[2]
}
\begin{document}

\maketitle

\begin{abstract}
While multilinear algebra appears natural for studying the multiway interactions modeled by hypergraphs, tensor methods for general hypergraphs have been stymied by theoretical and practical barriers. A recently proposed adjacency tensor is applicable to nonuniform hypergraphs, but is prohibitively costly to form and analyze in practice. We develop tensor times same vector (TTSV) algorithms for this tensor which improve complexity from $O(n^r)$ to a low-degree polynomial in $r$, where $n$ is the number of vertices and $r$ is the maximum hyperedge size. Our algorithms are implicit, avoiding formation of the order $r$ adjacency tensor.
We demonstrate the flexibility and utility of our approach in practice by developing tensor-based hypergraph centrality and clustering algorithms. We also show these tensor measures offer complementary information to analogous graph-reduction approaches on data, and are also able to detect higher-order structure that many existing matrix-based approaches provably cannot. 
\end{abstract}

\begin{keywords}
hypergraph, adjacency tensor, tensor times same vector, tensor-free methods, centrality, clustering
\end{keywords}

\begin{MSCcodes}
05C65, 15A69, 05C50, 05C85 
\end{MSCcodes}

\section{Introduction}
The study of hypergraphs is fraught with choices of representation. From Laplacians \cite{ bolla1993spectra, cardoso2022signless, rodri2002laplacian, zhou2006learning}, to probability transition matrices \cite{carletti2020random, chitra2019random, hayashi2020hypergraph}, to variants of incidence and adjacency matrices \cite{aksoy2020hypernetwork, cardoso2022adjacency, sole1996spectra} — there is no shortage of proposed hypergraph data structures. Despite these options, selecting among them can be challenging, as each comes with significant and sometimes nuanced limitations.  For example, adjacency, random walk, and Laplacian matrices are typically lossy in that they only contain information about the hypergraph’s clique expansion graph, thereby losing the information encoded in higher-order interactions \cite{agarwal2006higher, hayashi2020hypergraph}. In contrast, rectangular incidence matrices do faithfully model hypergraphs, but have analytical limitations: for instance, their singular values reflect information about the weighted line graph and clique expansion reductions, which do not uniquely identify the hypergraph \cite{kim2022gram}. Arguably, these challenges stem from mismatching hypergraphs, models of higher-dimensional relationships, with two-dimensional arrays.

Tensor arrays, therefore, appear a natural choice for hypergraph-native analyses. However, their application in the hypergraph setting poses immediate conceptual and computational challenges. A primary theoretical barrier is that nonuniform hypergraphs do not afford an obvious tensor representation. For this reason, despite real hypergraph-structured data nearly always exhibiting hyperedges of varying sizes, much of the existing tensor literature on hypergraphs is limited to the uniform case \cite{benson2015tensor, benson2017spacey, sharmajoint}, relies on augmenting the hypergraph with auxiliary nodes \cite{ouvrard2017adjacency,zhen2021community}, or synthesises a collection of differently sized tensors for each hyperedge size \cite{ke2019community}. One notable exception of using tensors to {\it directly} study nonuniform hypergraphs, however, is the adjacency tensor recently proposed by Banerjee, Char, and Mondal \cite{BANERJEE201714}. Loosely speaking, this tensor encodes nonuniform hyperedeges by ``inflating'' each to the maximum hyperedge size $r$. This yields an order $r$, $n$-dimensional tensor, where $n$ is the number of vertices. Consequently, the nonuniform adjacency tensor solves a conceptual challenge but poses a computational one: its explicit formation and analysis is intractable for nearly any hypergraph data with non-trivially sized hyperedges, since fundamental tensor operations like tensor times same vector (\ttsv) have cost $O(n^r)$. 

In this work, we focus on ameliorating these computational challenges to enable use of the hypergraph adjacency tensor in practice.  Our main focus is creating efficient algorithms for \ttsv. In particular, we drastically speed up the tensor times same vector in all modes but one (\ttsv[1]) operation from $O(n^r)$ to a low-degree polynomial in $r$. We perform an analogous speedup for tensor times same vector in all modes but two (\ttsv[2]) using an approach that easily generalizes to tensor times same vector in all modes but $k$ (\ttsv[k]). Moreover, our methods are implicit and tensor-free, avoiding formation of the costly order-$r$ tensor. We achieve these improvements using combinatorial methods that exploit the nuanced symmetry of the hypergraph adjacency tensor. We derive best- and worst-case complexity bounds of our algorithms, and supplement these analytical results with timing experiments on real data. 

Since \ttsv\ is a workhorse in many tensor algorithms such as CP decomposition and tensor eigenvector computation \cite{benson2019three,benson2019computing,benson2015tensor,benson2017spacey,kolda2015numerical,kolda2009tensor,sherman2020estimating}, our algorithms enable a host of tensor-based hypergraph analytics.  We illustrate this by proposing simple tensor-based centrality and clustering algorithms where \ttsv\ is the primary subroutine. 
For centrality, we apply recent nonlinear $Z$- and $H$-eigenvector formulations \cite{benson2019three} to nonuniform hypergraphs, whose existence is guaranteed by the Perron-Frobenius theorem for tensors \cite{qi2017tensor}. For clustering, we outline an approach that uses fast CP decomposition to obtain an embedding for the hypergraph, which is then fed into $k$-means~\cite{hartigan1979algorithm}, or any metric space approach, for clustering.  
We then study these measures experimentally, showing each offers complementary node importance information on data. 
Moreover, we show these tensor measures detect differences in structured hypergraphs with identical underlying graph information, as given by their weighted clique and line graphs.  
In contrast to many existing hypergraph methods, this means tensor approaches enabled by our algorithms analyze multiway interactions in hypergraph data directly -- without reducing them to groups of pairwise interactions modelled by graphs.

The paper is structured as follows: Section \ref{sec:prelims} reviews the necessary preliminaries. Section \ref{sec:ttsvs} presents our algorithms for \ttsv[1].  As the details of the algorithms and analysis for \ttsv[2] are similar to that of \ttsv[1], we defer their discussion to Section \ref{sec:TTSV2}.  Section \ref{sec:applications} applies these algorithms to develop a tensor-based approach for nonuniform hypergraph centrality and clustering. Section \ref{sec:conclusion} concludes and highlights avenues for future work.

\section{Preliminaries}\label{sec:prelims}
A {\it hypergraph} $H=(V,E)$ is a set $V$ of $n$ vertices and a set $E$ of $m$ hyperedges, each of which is a subset of $V$. The {\it degree} of a vertex $v$ and hyperedge $e$ is $d(v)=|\{e \in E: v \in e\}|$ and $d(e)=|e|$, respectively. The {\it rank} of a hypergraph is $\max_{e} d(e)$ and if $d(e)=k$ for all $e\in E$, we call the hypergraph {\it $k$-uniform}. The {\it volume} of a hypergraph is $\mbox{Vol}(H)=\sum_{v \in V}d(v)=\sum_{e \in E}d(e)$. The set of hyperedges to which $v$ belongs is denoted by $E(v)$ while $E(u,v)$ denotes those to which both $u$ and $v$ jointly belong. 
The {\it clique expansion} of a hypergraph $(V,E)$ is the graph on $V$ with edge set  $\{\{u,v\} \in V \times V: u,v\in e \mbox{ for some } e \in E\}$. 
For more basic hypergraph terminology, we refer the reader to \cite{aksoy2020hypernetwork, berge1984hypergraphs}. 

A tensor of order $r$ is an $r$-dimensional array. Lowercase bold letters denote vectors, e.g. $\Vc{a}$, while uppercase bold letters in regular and Euler script denote matrices and tensors, e.g. $\Mx{X}$ and $\T{X}$, respectively. For a tensor $\T{X}$, the value at index $i_1,\dots,i_r$ is given by $\T{X}_{i_1,\dots,i_r}$. In general, we assume the vertices of a hypergraph are indexed by $[n]=\{1,\dots,n\}$ and that each index in a tensor $\T{X}$ has $n$ components, $i_1,\dots, i_r\in [n]^r$. The notation $\Vc{a}^{[k]}$, $\Vc{a} \odot \Vc{b}$, and $\Vc{a} \oslash \Vc{b}$ denote the elementwise power, product, and division operations, respectively. Following standard notation from generating functions (see for instance \cite{wilf}), we will denote the coefficient of $t^r$ in the generating function $f(t)$ by $f[t^r]$. Lastly, we use Bachmann-Landau asymptotic notation. Our algorithm complexities are multivariate, and depend on the rank $r$, number of hyperedges $m$, and volume $\mbox{Vol}(H)$ of the hypergraph $H$.

Tensors have long been used to encode adjacency in uniform hypergraphs. In particular, a rank $r$ uniform hypergraph, its adjacency tensor is order $r$ with
\[
\A_{v_1,\dots,v_r}=\begin{cases} w  &\mbox{ if } \{v_1,\dots,v_r\}\in E\\ 0 & \mbox{ otherwise }
\end{cases} ,
\]
where $w$ refers to a chosen weight, such as $w=1$ or $1/(r-1)!$. 
Recently, \cite{BANERJEE201714} generalized this to nonuniform hypergraphs. Underlying this definition is a concept we call hyperedge {\it blowups}.

\begin{definition}\label{def:blowup}
Given a hyperedge $e$ of a rank $r$ hypergraph, we call the sets
\begin{align*}
\beta(e) &= \{i_1 \dots i_r \in e^r : \mbox{ for each } v\in e, \mbox{ there is } j \mbox{ such that } i_j=v\}, \\
\kappa(e) &= \{x: x \mbox{ is a size } r \mbox{ multiset with support } e\},
\end{align*}
the ``blowups" and ``unordered blowups" of hyperedge $e$, respectively.
\end{definition}

For example, for $e=\{1,3\}$ in a rank $3$ hypergraph, we have
\begin{align*}
\beta(e) &= \{(1,1,3),(1,3,1),(1,3,3),(3,1,1),(3,1,3),(3,3,1)\}, \\
\kappa(e) &= \{\{1,1,3\},\{1,3,3\}\}.
\end{align*}
The nonuniform hypergraph adjacency tensor \cite{BANERJEE201714} places nonzeros in positions corresponding to blowups of hyperedges, with values weighted by the size of that hyperedge's blowup set:
\begin{definition}[Nonuniform Hypergraph Adjacency Tensor \cite{BANERJEE201714}]
For a rank $r$ hypergraph, its adjacency tensor $\A$ is order $r$ and defined elementwise for each hyperedge $e$,
\[
\A_{p_1\dots p_{r}}=\begin{cases}w_e &\mbox{ if } p_1\dots p_r \in \beta(e) \\
0 &\mbox{ otherwise}
\end{cases},
\]
where $w_e$ denotes a chosen hyperedge weighting function.
\end{definition}
We note that while \cite{BANERJEE201714} takes $w_e=\frac{|e|}{|\beta(e)|}$ so that \ttsv[1] with the all ones vector yields the degree sequence, our algorithms will not require this choice. 
Furthermore, in addition to adjacency tensors, our methods also easily adapt to the Laplacian tensors in \cite{BANERJEE201714}, as we illustrate in Section \ref{subsec:clustering}.
Lastly, a different approach to building nonuniform hypergraph adjacency tensors relies on inserting copies of an auxiliary ``dummy" vertex within each hyperedge \cite{zhen2021community}, rather than using those already present in the hyperedge via blowups. While we focus on the blowup approach, our algorithms easily adapt to this case as well. 
The \ttsv\ operations we tailor for $\A$ form the backbone of many tensor algorithms such as tensor decomposition~\cite{kolda2015numerical,kolda2009tensor,sherman2020estimating} and eigenvector computation~\cite{benson2019three, benson2019computing,benson2015tensor,benson2017spacey}, and are defined in general as follows: given an order $r$ tensor $\X$, and a vector $\Vc{b}\in\R^n,$ 
$\Tn{X}\Vc{b}^{r-1}\in\R^n$ denotes the TTSV1 operation given by 
\begin{align}\label{eq:ttsv1}
\left[\Tn{X}\Vc{b}^{r-1}\right]_{i_1} &= \sum_{i_2=1}^n\cdots\sum_{i_r=1}^n\X_{i_1,\dots,i_r}\prod_{k=2}^r\Vc{b}_{i_k},
\end{align}
and TTSV2, denoted $\X \Vc{b}^{r-2}\in\R^{n\times n}$, is given by
\begin{align}\label{eq:ttsv2}
\left[\X \Vc{b}^{r-2}\right]_{i_1,i_2} &= \sum_{i_3=1}^n\cdots\sum_{i_r=1}^n\X_{i_1,\dots,i_r}\prod_{k=3}^r\Vc{b}_{i_k}.
\end{align}
Given the TTSV2 matrix, the TTSV1 vector is easily obtained through right-multiplication by $\Vc{b}$.
However, whenever the full TTSV2 is unnecessary, it suffices to compute TTSV1 directly. 
Lastly, we note computing TTSV without explicitly forming the tensor is an example of an {\it implicit} tensor operation, and has been recently studied in the context of moment tensors of Gaussian mixture models \cite{pereira2022tensor, sherman2020estimating}.
\section{Tensor times same vector for hypergraphs} \label{sec:ttsvs}
We now develop efficient, implicit \ttsv\ methods for analyzing the hypergraph adjacency tensor $\A$.  Rather than explicitly constructing, storing, or accessing elements of $\A$, our algorithms directly facilitate tensor operations on the input hypergraph. 
We divide our work into two approaches: first, we present simple algorithms which achieve speedup over the na\"ive approach by leveraging several combinatorial observations on the unordered blowups discussed in Definition \ref{def:blowup}. Then, we further improve upon these algorithms by using generating function theory. Code for our TTSV algorithms is available at \url{https://github.com/pnnl/GENTTSV}, in Python for hypergraph libraries HyperNetX \cite{praggastis2023hypernetx}, HypergraphX \cite{lotito2023hypergraphx}, and XGI \cite{landry2023xgi}, and Julia for SimpleHypergraphs \cite{antelmi2019simplehypergraphs}.

\subsection{Unordered blowup approach}

This approach for computing \ttsv[1] for $\A$ will use the following basic combinatorial facts.

\begin{lemma}\label{lem:comb}
Let $e=\{v_1,\dots,v_k\}$ be a hyperedge of a rank $r$ and $\beta(e)$ and $\kappa(e)$ as defined in Definition $\ref{def:blowup}$. Then
\begin{itemize}
    \item[(a)] $|\beta(e)|=|e|!\cdot \stirling{ r}{ |e|}$, where where $\stirling{r}{ |e|}$ denotes the Stirling number of the second kind \footnote{An explicit formula for Stirling numbers of the second kind is $\stirling{n}{k}=\frac{1}{k!}\sum_{i=0}^k (-1)^i \binom{k}{i} (k-i)^n$.}.
    \item[(b)] $|\kappa(e)|= \binom{r-1}{r-|e|}$.
    \item[(c)] For given vertices $u \in e$ and $x\in\kappa(e)$, the number of blowups $i_1,\dots,i_r \in \beta(e)$ that have support equal to that of $x$ with $i_j=u$ is given by the multinomial coefficient
\[
        \phi_1(x,u) \coloneqq
        \binom{r-1}{m_x(v_1),\dots,m_x(u)-1,\dots, m_x(v_k)} 
\]
   where $m_x(w)$ denotes the multiplicity of node $w$ in $x$.
\end{itemize}
\end{lemma}

\phantomsection\label{alg:TTSV1U}
  \begin{algorithm}[H] \small
  \LinesNotNumbered 
\KwData{rank $r$ hypergraph $(V,E,w)$, vector $\Vc{b}$}
\KwResult{$\A\Vc{b}^{r-1}=\Vc{s}$}
\For{$v\in V$}{
$c \gets 0$\;

\For{$e\in E(v)$}{
\For{$x\in\kappa(e)$}{
$c\pluseq \displaystyle w_e \frac{\phi_1(x,v)}{\Vc{b}_v}  \prod\limits_{u \in x} \Vc{b}_u$\;
}
}
$\Vc{s}_v \gets c$\;
}

\Return{$\Vc{s}$}
\caption*{\textsc{TTSV1-Unord}: \ttsv[1] via unordered blowups}
\end{algorithm}


\hyperref[alg:TTSV1U]{\ttsvU} presents the algorithm for the unordered blowup approach. 
The main idea is to exploit the nonzero pattern in $\A$. 
Recalling that $i_1$ corresponds to a node index when applying the TTSV1 vector given in Eq. \ref{eq:ttsv1} to $\A$, the only nonzeros that contribute to the sum for this component correspond to blowups of all hyperedges to which node $i_1$ belongs. This observation significantly reduces the cost of \ttsv[1] from the naive $O(n^r)$ approach in the absence of the special structure induced by the hypergraph, and leads to a less na\"ive algorithm: for every vertex, iterate through its hyperedges and update the sum over all elements of the corresponding blowups. However, this approach still requires explicitly enumerating blowups, which is extremely costly. Instead, \hyperref[alg:TTSV1U]{\ttsvU} only considers {\it unordered} blowups. By Lemma \ref{lem:comb}(c), there are exactly $\phi_1(x,v)$ many blowups with $v$ in a fixed position that correspond to a given unordered blowup $x \in \kappa(e)$. The correctness of \hyperref[alg:TTSV1U]{\ttsvU} immediately follows. We note the cost savings here occurs {\it per hyperedge}, yielding speedups of several orders of magnitude for real datasets with larger $r$.

\begin{proposition}\label{P:TTSV}
Let $H=(V,E)$ be a rank $r$ hypergraph with $m$ edges then  \hyperref[alg:TTSV1U]{\ttsvU} runs in time $\bigTheta{\sum_{e \in E} r\size{e} \binom{r-1}{{r-\size{e}}}}$. Let $\epsilon = \min\set{\frac{\Vol(H)}{m}, 1- \frac{\Vol(H)}{m}}$, then this running time is at least 
\[ \bigOmega{r^2 m} \quad \textrm{and at most} \quad \bigOh{\epsilon m r^{\nicefrac{3}{2}}2^r}.\]
\end{proposition}
\begin{proof}
Calculating the inner most terms of the sum runs in $\bigTheta{r}$ and thus the run time is 
\[ \sum_{v \in V} \sum_{e \in E(v)} \sum_{x \in \kappa(e)} \bigTheta{r} = \sum_{e \in E} \sum_{v \in e} \sum_{x \in \kappa(e)} \bigTheta{r} = \sum_{e \in E} \bigTheta{ r \size{e} \binom{ r- 1}{r - \size{e}}}.\]
Thus, we have that the run time is completely determined by the sizes of the edges of $H$.  Now noting that, by algebraic manipulation, we have that $r \size{e} \binom{r - 1}{r - \size{e}} = \size{e}^2 \binom{r}{\size{e}}$, we define $f: \set{2,\ldots, r} \rightarrow \N$ by $f(k) = k^2 \binom{r}{k}$.  We observe that 
\[ \frac{f(k+1)}{f(k)} = \frac{(k+1)^2 \binom{r}{k+1}}{k^2\binom{r}{k}} = \frac{(k+1)^2 k! (r-k)!}{k^2 (k+1)! (r-k-1)!} = \frac{k+1}{k}\frac{r-k}{k}.\] Thus the maximum of $f$ occurs at $k^* = \floor{\frac{r}{2}} + 1$ and $f$ is monotonically increasing below $k^*$ and monotonically decreasing above $k^*$.
As $f(2), f(r) \in \bigTheta{r^2}$, we have that the run time of \hyperref[alg:TTSV1U]{\ttsvU} is bounded below by $\bigTheta{r^2m}$.  

For the upper bound, let $e_1, \ldots, e_m$ be a sequence of integers such that $\sum_i e_i = \Vol(H)$ and $\sum_i f(e_i)$ is maximized.  By the monotonicity of $f$, either $e_i \leq k^*$ for all $i$ or $e_i \geq k^*$ for all i.  Let $C = \floor{2\sqrt{r}}$ and suppose $k$ and $\ell$ are such that $\abs{k - k^*}, \abs{\ell - k^*} \geq C$, then we have  
\begin{align*}
    \frac{f(k) + f(\ell)}{f(k^*)} = \frac{k^2 \binom{r}{k} + \ell^2 \binom{r}{\ell}}{(k^*)^2\binom{r}{k^*}}
    \leq \frac{2r^2 \binom{r}{k^* - C}}{(k^*)^2 \binom{r}{k^*}} 
    &\leq 8 \frac{ (k^*)! (r- k^*)!}{ (k^* - C)! (r - k^* + C)!} \\
    &= 8 \prod_{j = 1}^{C} \frac{k^* - C + j}{r - k^* + j} \\
    &\leq 8\paren{ \frac{k^*}{r - k^* + C}}^{C} \\
    &= 8\paren{ 1 - \frac{r - 2k^* + C}{r -k^* + C}}^{C} \\
    & \leq 8\paren{1 - \frac{C-1}{\frac{r}{2} + C}}^{C} \\
    &\leq 8 \exp\left( - \frac{C(2C-2)}{r + 2C}\right) \leq 1.
\end{align*}
This implies that, by the maximality of $e_1,\ldots,e_m$, that $e_i \in \set{2,r} \cup [k^* - C, k^* + C]$ for all $i$.  Further, since $f(k) = \bigTheta{f(k^*)} = \bigTheta{r^{\nicefrac{3}{2}}2^r}$ for all $k^* - C \leq k \leq k^* + C$, this gives the desired upper bound on the runtime of 
\hyperref[alg:TTSV1U]{\ttsvU}.
\end{proof}

\subsection{Generating function approach} 
The run time of \hyperref[alg:TTSV1U]{\ttsvU} is dominated by the inner-most loop, iterating over unordered blowups $\kappa(e)$ for all edges $e$.  Indeed, for most real datasets, where the size of a typical edge is much smaller than the largest edge, this is a significant bottleneck as the size of the this set scales exponentially with the size of the edge, i.e.\ $|\kappa(e)| = O(r^{|e|})$.  However, by taking a generating function approach the inner loop can be considerably simplified and accelerated.  To illustrate this, consider the 
summation over the inner loop for the case when $e = \set{u,v} \in E(v)$,
\begin{align*}
    \sum_{x \in \kappa(\set{u,v})} w_e \frac{\phi_1(x,v)}{\Vc{b}_v} \prod_{i \in x} \Vc{b}_i &= \frac{w_e}{\Vc{b}_v} \sum_{k_u = 1}^{r-1} \phi_1( u^{k_u} v^{r-1 - k_u}, v) \Vc{b}_u^{k_u} \Vc{b}_v^{r-k_u} \\
    &= w_e \sum_{k_u = 1}^{r-1} \frac{(r-1)!}{k_u! (r-1-k_u)!} \Vc{b}_u^{k_u}\Vc{b}_v^{r-1-k_u} \\
    &= w_e (r-1)! \sum_{k_u = 1}^{r-1} \frac{1}{k_u!} \Vc{b}_u^{k_u} \frac{1}{(r-1-k_u)!} \Vc{b}_v^{r-1-k_u}. \tag{\textrm{\textasteriskcentered}}\label{eq:sum}
\end{align*}
We note that the summation in (\ref{eq:sum}) can be viewed as a convolution of two sequences 
\[ 0,\frac{1}{1!}\Vc{b}_u^1, \frac{1}{2!}\Vc{b}_u^2, \ldots, \frac{1}{(r-1)!} \Vc{b}_u^{r-1} \quad \textrm{and} \quad \frac{1}{0!} \Vc{b}_v^0, \frac{1}{1!}\Vc{b}_v^1,  \frac{1}{2!}\Vc{b}_v^2, \ldots, \frac{1}{(r-1)!}\Vc{b}_v^{r-1}.\]
Alternatively, if we define $u_r(t) = \sum_{k=1}^{r-1} \frac{1}{k!}\Vc{b}_u^kt^k$ and $v_r(t) = \sum_{k=0}^{r-1} \frac{1}{k!}\Vc{b}_v^kt^k$, the summation (\ref{eq:sum}) may be thought of as the coefficient of $t^{r-1}$ in $u_r(t)v_r(t)$.  Recall that the coefficient of $t^r$ in the generating function $f(t)$ is denoted by  $f[t^r]$.\strike{ function $f(t)$ we will denote the coefficient which we denote $(u_r(t)v_r(t))[t^{r-1}]$. }
In fact, for any $d\geq 0$ the summation is equal to \strike{$\paren{u_{r+d}(t) v_{r+d}(t)}[t^{r-1}]$}$\paren{u_{r+d}v_{r+d}}[t^{r-1}]$ and thus taking 
\[ u(t) = \sum_{k=1}^{\infty} \frac{1}{k!}\Vc{b}_u^k t^k = \exp({\Vc{b}_ut}) - 1 \quad \textrm{and} \quad v(t) = \sum_{k=0}^{\infty} \frac{1}{k!} \Vc{b}_v^k t^k = \exp({\Vc{b}_vt}),\]
we have that \strike{$(u(t)v(t))[t^{r-1}]$}$(u v)[t^{r-1}]$ is the value of the inner sum for an arbitrary choice of largest edge size $r$.\strike{Extending the above argument to an arbitrary edge $e \in E(v)$,} This argument may be easily extended to an arbitrary edge $\set{v,u_1,\ldots, u_{\ell}} = e \in E(v)$, by noting that the contribution to inner sum  by blowups of $e$ with $k_v$ copies of $v$ and $k_{1}, \ldots, k_{\ell}$ copies of $u_1,\ldots, u_{\ell}$ is given by 
\[ w_e\frac{(r-1)!}{(k_v)! \prod_{j=1}^{\ell} (k_j)!} \Vc{b}_{v}^{k_v} \prod_{j=1}^{\ell} \Vc{b}_{u_j}^{k_j} = w_E(r-1)! \frac{\Vc{b}_v^{k_v} }{(k_v)!} \prod_{j=1}^{\ell} \frac{ \Vc{b}_{u_j}^{k_j}}{(k_j)!}. \]
Now by summing over valid choices of $k_v$ and $k_1,\ldots,k_{\ell}$, we get that the total contribution of the inner loop from the edge $e$ is given by
\[ w_e (r-1)! \sum_{k_v=0}^{r-\ell-1} \sum_{\substack{1 \leq k_j \leq r-\ell \\ k_v + \sum_j k_j = r}}  \frac{\Vc{b}_v^{k_v} }{(k_v)!} \prod_{j=1}^{\ell} \frac{ \Vc{b}_{u_j}^{k_j}}{(k_j)!}.\]
As above, if we introduce the variable $t$ this can be viewed as the coefficient of $t^{r-1}$ in a generating function, specifically 
\[
\paren{w_e (r-1)! \sum_{k_v = 0}^{\infty} \sum_{k_1,\ldots, k_{\ell} = 1}^{\infty} \frac{\Vc{b}_v^{k_v}t^{k_r} }{(k_v)!} \prod_{j=1}^{\ell} \frac{ \Vc{b}_{u_j}^{k_j}t^{k_j}}{(k_j)!} } [t^{r-1}]. \]

Thus, by interchanging the products and summations we have that the inner sum in \hyperref[alg:TTSV1U]{\ttsvU} is given by \[w_e (r-1)!\paren{\exp({\Vc{b}_vt}) \prod_{u \in e - \set{v}} \paren{\exp({\Vc{b}_ut}) - 1 }}[t^{r-1}].\]

Alternatively, the expression for the inner sum can be derived analytically from the generating function representation for \ttsv. Specifically, using the same approach as above we have that 
\[ \Tn{X}\Vc{b}^r = \paren{\sum_{e \in E} w_e r! \prod_{v \in e} (\exp(\Vc{b}_vt)-1)}[t^r]. \] 
Since, as shown in \cite[Lemma 3.1 and 3.3]{kolda2011shifted} for $k=1$ and $2$, any symmetric tensor $\Tn{X}$ and vector $\Vc{b}$ satisfy $\nabla^k \left(\Tn{X}\Vc{b}^r\right) = \frac{r!}{(r-k)!} \Tn{X}\Vc{b}^{r-k}$, we have that 
\[ \Tn{X}\Vc{b}^{r-k} = \paren{\sum_{e \in E} w_e (r-k)! \nabla^k \prod_{v \in e} (\exp(\Vc{b}_vt)-1)}[t^{r-k}].\]
For example, the $(u,u)$ diagonal term of \ttsv[2] are given by
\[\paren{\sum_{e \in E} w_e (r-2)! \frac{\partial^2}{\partial \Vc{b}_u^2} \prod_{v \in e} \left(\exp(\Vc{b}_vt)-1\right)}[t^{r-2}]\]\[ = \paren{\sum_{e \in E(u)} w_e (r-2)! \exp(\Vc{b}_ut)  \prod_{v \in e - \set{u}} \left(\exp(\Vc{b}_vt)-1\right)}[t^{r-2}]. \]


\begin{wrapfigure}[16]{l}{0.35\textwidth}
    \centering
    \includegraphics[width = 1.\linewidth]{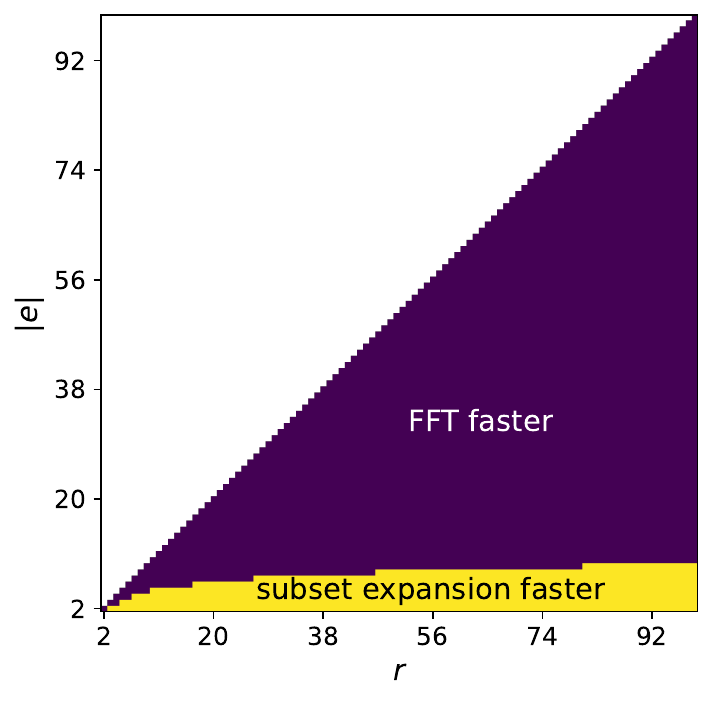}
    \caption{Efficiency of FFT versus subset expansion for generating function evaluation. }
    \label{fig:faster}
\end{wrapfigure}

While this generating function simplifies the exposition of these algorithms, in practice we require an efficient means of extracting the appropriate coefficient from the generating function. 
Accordingly, we present two different options for evaluating the generating function approach, a subset expansion approach, \hyperref[alg:SubEx]{\subEx},  which runs in time $\bigTheta{\paren{\size{e} + \log_2(r)}2^{\size{e}}}$,
and a modification of the fast Fourier transform (FFT)~\cite{cooley1965algorithm}  approach to polynomial multiplication, \hyperref[alg:fftG]{\fftG}, which runs in time $\bigTheta{\size{e} r\log_2(r)}$. 
We note the runtimes of these two algorithms are asymptotically equivalent for edges of size $\log_2(r) + \log_2\log_2(r) + \bigTheta{1}$, with \hyperref[alg:SubEx]{\subEx} asymptotically faster for smaller edges and \hyperref[alg:fftG]{\fftG} for larger edges.
For convenience, we assume all edges of size at most  $\log_2(r) + \log_2\log_2(r)$ are evaluated using \hyperref[alg:SubEx]{\subEx} and larger edges are evaluated via \hyperref[alg:fftG]{\fftG}. 
Figure~\ref{fig:faster} plots the edge size region where subset expansion outperforms FFT (and vice-versa). 
It is interesting to note that difference in run-times between the two approaches can be understood by which computationally efficient sub-routines are being used. For instance, the \hyperref[alg:fftG]{\fftG} relies on the FFT as a computationally efficient subroutine, and as a consequence must take the time to form all the degree $r$ polynomials associated with the edge $e$ and operate on these polynomials (which are larger than the original edge).  In contrast to this approach, \hyperref[alg:SubEx]{\subEx} relies on the fact that arbitrary coefficients of exponential generating functions can be efficiently evaluated.  In particular, \hyperref[alg:SubEx]{\subEx} relies on the fact that 
 $e^{at}\prod_{i=1}^{k}\paren{e^{b_it} - 1}$ can be expanded to 
 \[ e^{at} \sum_{S \subseteq [k]} (-1)^{\size{\overline{S}}}\prod_{s \in S}e^{b_st} = \sum_{S \subseteq [k]} (-1)^{k-\size{S}} e^{(a + \sum_{s \in S} b_s)t}.\]  
 The $t^r$ coefficient of each of these summands can then be trivially evaluated.  Thus, there is an exponential dependence on the edge size and minimal dependence on the maximum edge size $r$.

 While not explored in this work, we note that further asymptotic speed-ups are possible by hybridizing these two approaches.  Specifically, by partitioning the edge $e$ into small sets $e_1, \ldots, e_k$ of size approximately $\log\log(r)$, the full generating function for these can be calculated for the sets individually using a variant of the \hyperref[alg:SubEx]{\subEx} approach and then this set of generating functions could be combined using the FFT.

\begin{center}
\begin{minipage}[t]{0.48\linewidth}
  \vspace{0pt}  
  \phantomsection\label{alg:fftG}
  \begin{algorithm}[H] \small
\LinesNotNumbered 
\KwData{$a, b_1, \ldots, b_k \in \R$ and $r \in \N$.}
\KwResult{$\paren{\exp({at}) \displaystyle\prod_i \paren{\exp({b_it}) - 1}}[t^r]$}
{$f \gets \displaystyle\sum_{j=0}^r \frac{a^j}{j!} t^j$}\;\\
\For{ $i = 1, \ldots, k$}{
{$g \gets \displaystyle\sum_{j=1}^r \frac{b_i^j}{j!}t^j$} \; 

{$f \gets \textsc{multFFT}(f,g)$}\; 

{$f \gets \displaystyle\sum_{j=0}^r f[t^j] t^j$}\; 
}
\Return{$f[t^r]$}
\caption*{\fftG: FFT evaluation of generating function}
\end{algorithm}
\end{minipage} \hfil
\begin{minipage}[t]{0.48\linewidth}
 \vspace{0pt}
 \phantomsection\label{alg:multFFT}
\begin{algorithm}[H] \small
  \LinesNotNumbered 
\KwData{polynomials $f(t)$ and $g(t)$}
\KwResult{$f(t) g(t)$}
{$c(t) \gets FFT^{-1}(FFT(g(t))\odot FFT(f(t)))$\;} \\
\Return{$c(t)$}
\caption*{\multFFT: Multiplication using the FFT}
\end{algorithm}
\vspace{4pt}
\phantomsection\label{alg:SubEx}
\begin{algorithm}[H]
    \small
    \KwData{$a,b_1,\ldots,b_k \in \R$ and $r \in \N$}
    \KwResult{$\paren{\exp({at}) \prod_i \paren{\exp({b_it}) - 1}}[t^r]$}
    {$c \gets 0$\;} \\%
\For{$S \subseteq [k]$}{
        {$c \pluseq (-1)^{k - \size{S}} \paren{a + \sum_{s \in S} b_s }^r$}\;
    }
    \Return{\nicefrac{c}{r!}}
    \caption*{\subEx: Subset enumeration evaluation of generating function}  
    \label{alg:subset_gen}
    \end{algorithm}
\end{minipage}
\end{center}



\begin{proposition}\label{P:TTSVgen}
Let $H=(V,E)$ be a rank $r$ hypergraph with $m$ edges and let $k^* = \log_2(r) + \log_2\log_2(r)$. The asymptotic run-time of $\hyperref[alg:TTSV1G]{\ttsvG}$ is given by 
\[ \sum_{\substack{e \in E \\ \size{e} \leq k^*}} \size{e} 2^{\size{e}}\log_2(r) + \sum_{\substack{e \in E \\ \size{e} > k^*}} \size{e}^2 r\log_2(r).\] If, in addition,  the average hyper edge size $\delta = \frac{\Vol{H}}{m}$ is such that there is some constant $\epsilon > 0$ such that $2+\epsilon < \delta < (1-\epsilon)r$, \hyperref[alg:TTSV1G]{\ttsvG} runs in time at least 
\[ \begin{cases} \bigOmega{\Vol(H)2^{\delta}\log_2(r)} & \delta < \frac{1}{2}k^* \\ \bigOmega{\Vol(H)2^{(1-\lilOh{1})\delta}\log_2(r)} & \frac{1}{2} k^* \leq \delta \leq k^*  \\
\bigOmega{\Vol(H)\delta r\log_2(r)} & k^* < \delta
\end{cases} \]
and at most  
\[ \bigOh{r^2\log_2(r) \Vol(H)}.\]
\end{proposition}

\begin{proof}
As in the proof of Proposition \ref{P:TTSV}, the run time depends on the hyperedge size sequence, yielding  
 $\bigTheta{\sum_{e \in E} \size{e} h(\size{e},r)}$, where $h(k,r)$ is the run time of extracting the coefficient of $t^{r-1}$ in the generating function associated with an edge of size $k$. If for every edge, we use the faster of \hyperref[alg:fftG]{\fftG} and \hyperref[alg:SubEx]{\subEx}, this gives that 
\[ h(k,r) = \begin{cases} \log_2(r)2^k & k \leq k^* \\ kr\log_2(r) & k^* < k\end{cases}. \]
As the shape of the edge size distribution has different effects for ``small" edges (those of size at most $k^*$) and ``large" edges (those with size at least $k^*$), we consider each of these distributions separately.  To that end, let $m_S$ be the number of small edges and let $m_F$ be the number of large edges, with $\delta_S$ and $\delta_F$ denoting the respective average edge sizes.  We note that $m = m_S + m_F$ and $\delta_S m_S + \delta_F m_F = \Vol(H) = \delta m$.   We first note that by standard results, the run time for large edges is minimized when all edges have the same size, $\delta_F$, and maximized when all edges have size either $k^*$ or $r$.

We now consider the extremal run times for the small edges and show that local modifications of the edge sizes will lead to both extremal runtimes.  In particular, let $e$ and $f$ be small edges such that $\size{e} + 1 < \size{f} \leq k^*$, then 
\begin{align*}
\frac{\size{e} h(\size{e},r) + \size{f} h(\size{f},r)}{\log_2(r)} &= \size{e}2^{\size{e}} + \size{f}2^{\size{f}} \\
&= \paren{\size{e}+1}2^{\size{e}+1} + \paren{\size{f}-1}2^{\size{f}-1}  - \paren{\size{e}+2}2^{\size{e}} + \paren{\size{f}+1}2^{\size{f}-1} \\
&\geq \paren{\size{e}+1}2^{\size{e}+1} + \paren{\size{f}-1}2^{\size{f}-1} \\
&= \frac{\paren{\size{e}+1} h(\size{e}+1,r) + \paren{\size{f}-1} h(\size{f}-1,r)}{\log_2(r)} 
\end{align*}  
As a consequence, the run time of the small edges is minimized when all $m_S$ of the edges have the same size, $\delta_S$, and it is maximized when every small edge has size either $2$ of $k^*$. 

Thus, the maximum total run time is at most 
\[
8\log_2(r) \frac{(r-\delta)m -(r-k^*)m_k}{r-2}+ (k^*)^2 r\log_2(r) m_k +r^{3}\log_2(r) \frac{(\delta - 2)m - (k^*-2)m_k}{r-2}    
\]
where $m_k$ is the number of edges of size $k^*$.  
As the coefficient on $m_k$ in this expression is asymptotically negative, this is maximized when $m_k$ is 0.  That is, when the only edge sizes are either $2$ and $r$, and thus the asymptotic runtime is $\bigOh{r^2\log_2(r)\Vol(H)}.$

To address the minimum run time, we recall from above that the run time is minimized if all the small edges and all the large edge have the same size.  For notational convenience, we define $V_S = \delta_S m_S$ and $V_F = \delta_F m_F$ as the volume of edges which are processed by subset expansion and FFT, respectively. Then, the asymptotic run time is given by
\[ V_S 2^{\frac{V_S}{m_S}}\log_2(r) + m_F \paren{\frac{V_F}{m_F}}^2r\log_2(r) = V_S 2^{\delta_S}\log_2(r) + V_F \delta_F r\log_2(r).\] 

Now suppose that $\nicefrac{V_F}{\Vol{H}} \in \Omega({1})$.  Then, as $\delta_F \geq \delta$, we see that 
\[ \frac{V_F \delta_F r\log_2(r)}{\Vol(H) \delta r\log_2(r)} \in \bigOmega{1}.\]
In particular, this implies that the only case where the run time is $\lilOh{\Vol(H)\delta r\log_2(r)}$ is when $V_S = (1-\lilOh{1}) \Vol(H)$ and $\delta_S = (1-\lilOh{1)}\delta$. As $\delta_S < k^*$, this implies that if $\delta \geq k^*$, then the minimal run time is $\bigOmega{\Vol(H)\delta}$.

  We note that we may view $V_S$ and $V_K$ as functions of $\delta_S$ and $\delta_K$, yielding
\[ f(\delta_S,\delta_F) = \frac{m\delta_K - \Vol(H)}{\delta_K - \delta_S}\delta_S 2^{\delta_s}\log_2(r) + \frac{\Vol(H) - m\delta_S}{\delta_F - \delta_S}\delta_F^2 r\log_2(r).\]

It is a straightforward, albeit tedious exercise, to show that, taken as a function of $\delta_F$, $f$ is minimized when $\delta_F$ takes the value 
\[ \delta_F^* = \delta_S + \delta_S\sqrt{1 - \frac{2^{\delta_S}}{\delta_S r}} \leq 2\delta_S.\]
In particular, this implies if $\delta_S < \frac{1}{2}k^*$  (equivalently, if $\delta < k^*$) then the minimum run time is 
\[\bigTheta{ \Vol(H) 2^{\delta}\log_2(r)}.\]

\end{proof}

We note the gap between upper and lower runtime bounds for \hyperref[alg:TTSV1G]{\ttsvG} and \hyperref[alg:TTSV2G]{\ttsvG[2]} are on the order of $\nicefrac{r}{\delta}$ and $\paren{\nicefrac{r}{\delta}}^2$, respectively. This is, in a sense, the best possible gap as it represents the ratio between the maximum and minimum second and third moment, respectively, of edge sizes for a fixed average size.  In practice, these moments govern the runtime for these algorithms up to a multiplicative function based on the max edge size $r$.

\section{The hypergraph adjacency tensor in practice}\label{sec:applications}

We explain how to use our \ttsv\ algorithms to perform fundamental hypergraph analyses, which we then apply to the datasets listed and summarized in Table \ref{tab:allexp1}. In particular, we focus on algorithms for computing tensor eigenvector centralities, as well as a CP-decomposition approach for performing tensor-based hypergraph clustering. Our study here is meant to be illustrative rather than exhaustive: \ttsv\ algorithms find far-ranging application in tensor computation beyond these two tasks. While our goal is not to argue that the specific centrality and clustering algorithms we propose are the best of their type, we show they provide complementary information to graph reduction approaches performed on the clique expansion, and provide a concrete example to illustrate they detect subtle, higher-order structure in hypergraphs that a wide-variety of popular, existing matrix-based methods provably cannot. Taken together, this suggests the tensor-approaches enabled by our algorithms are flexible, tractable, and worthwhile avenues for nonuniform hypergraph data analysis. Before proceeding, we compare the empirical run times of the aforementioned \ttsv\ algorithm variants. All experiments were run on a single core of a MacBook Pro with an M1 Max processor and 32GB of RAM.

\begin{minipage}{\textwidth}
\centering
\begin{minipage}[b]{.6\textwidth}
  \centering
  \scalebox{0.75}
  {
  \begin{tabular}{l     l l l l l l}
    \toprule
    \emph{Dataset}              & Ref. & $\lvert V \rvert$ & $ \lvert E \rvert$ & $\delta$ & $\Vol(H)$ & $r$\\
    \midrule
    \texttt{mathoverflow} & \cite{Veldt-2020-local}                                & 73.8K              & 5.4K         & 24.2      & 132K  & 1.8K\\
    \quad {\footnotesize \it filtered} & & \footnotesize 39.8K              & \footnotesize 5.2K          & \footnotesize 11.3       & \footnotesize 58.8K & \footnotesize 100 \\
    \texttt{MAG-10} & \cite{amburg2020clustering,Sinha-2015-MAG}                           & 80.2K              & 51.9K         & 3.5      & 181K & 25\\
    \texttt{DAWN} & \cite{amburg2020clustering}                                            & 2.1K               & 87.1K         & 3.9      & 343K  & 22\\
    \texttt{cooking} & \cite{amburg2020clustering}                                         & 6.7K               & 39.8K         & 10.8     & 429K  & 65\\
    \texttt{walmart-trips} & \cite{amburg2020clustering}                                   & 88.9K              & 69.9K         & 6.6      & 461K  & 25\\
    \texttt{trivago-clicks}& \cite{chodrow2021hypergraph}                                 & 173K               & 233K          & 3.1      & 726K  & 86\\
    \texttt{stackoverflow} & \cite{Veldt-2020-local}                               & 15.2M              & 1.1M &  23.7& 26.1M & 61.3K \\
    \quad {\footnotesize \it filtered} & & \footnotesize 7.8M               & \footnotesize  1.0M           & \footnotesize  10.1       & \footnotesize 10M & \footnotesize  76\\
    \texttt{amazon-reviews}&\cite{ni2019justifying}                                      & 2.3M               & 4.3M          &  17.1    & 73.1M & 9.4K \\
    \quad {\footnotesize \it filtered} & & \footnotesize 2.2M & \footnotesize 3.7M & \footnotesize 9.6 & \footnotesize 35.6M & \footnotesize 27 \\
    \bottomrule
  \end{tabular}
  }
    \captionof{table}{Summary statistics of datasets.} \label{tab:allexp1}
\end{minipage}
\begin{minipage}[b]{135pt}
    \centering
    \includegraphics[trim = 10 15 5 10, clip, width=130pt]{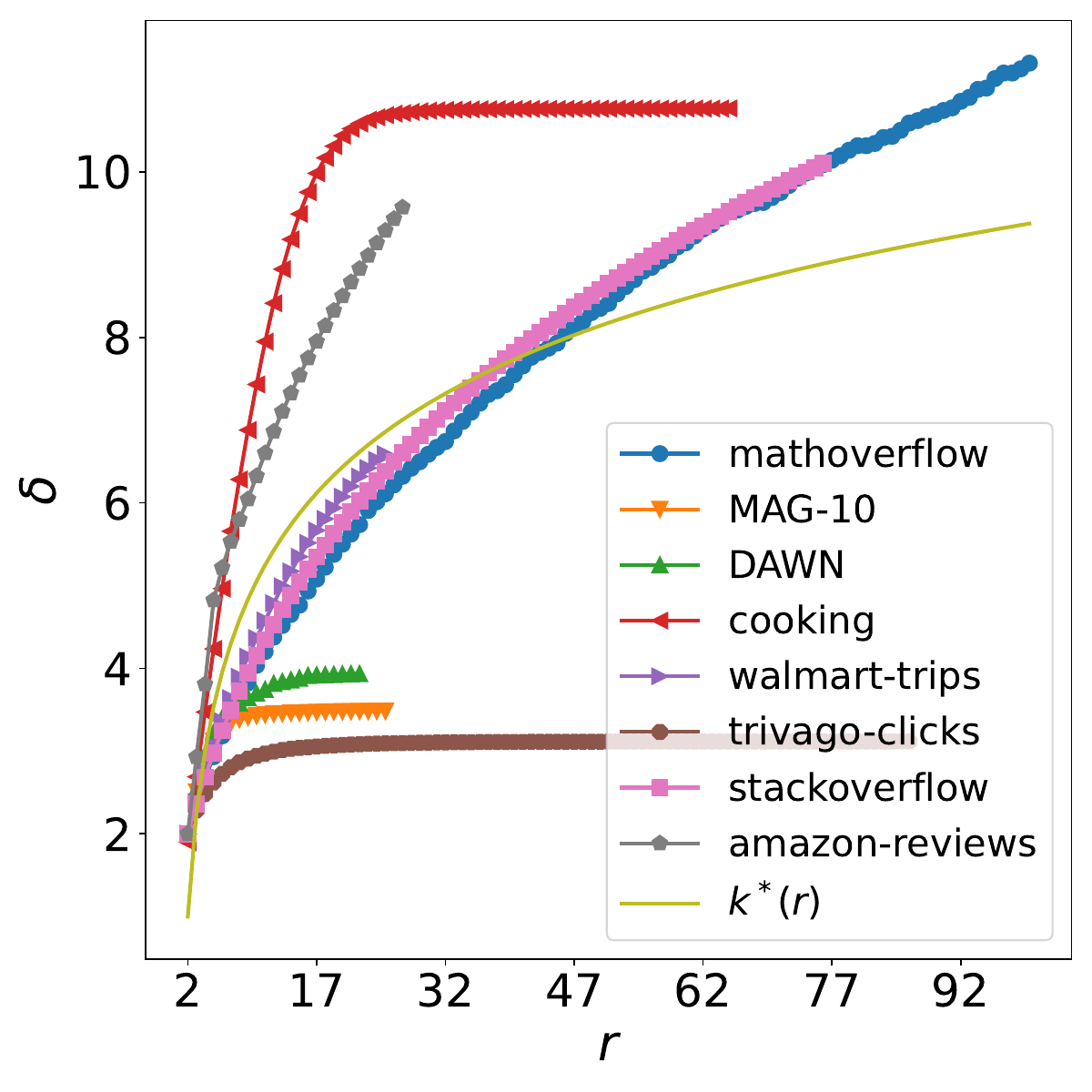}
    \captionof{figure}{$\delta$ vs. $r$}
    \label{fig:my_label}
\end{minipage}
\end{minipage}

\subsection{Timing experiments} 
We compare the empirical runtimes of \hyperref[alg:TTSV1G]{\ttsvG} and \hyperref[alg:TTSV1U]{\ttsvU} against two baselines: {\it explicit}, in which $\A$ is constructed explicitly and \ttsv[1] is computed from definition, and {\it ordered}, which is identical to \hyperref[alg:TTSV1U]{\ttsvU} except iterates over full blowup sets $\beta(e)$ rather than $\kappa(e)$. 
To study scaling in hypergraph rank, we increase $r$ from 2 up to a maximum of 100 or the largest hyperedge size\footnote{See~\cref{sec:numerical} for further discussion of numerical considerations.}, or until the execution timeout of one hour. This procedure corresponds to the ``less than or equal to'' (LEQ) filtering discussed in~\cite{landry2023filtering}. With this scheme, we are able to process \texttt{MAG-10}, \texttt{DAWN}, \texttt{walmart-trips}, \texttt{cooking}, and \texttt{trivago-clicks} in their unfiltered entirety, \texttt{mathoverflow} until $r=100$, to guard against floating-point errors, and \texttt{stackoverflow} and \texttt{amazon-reviews} until timeout at $r=76$ and $r=27$, respectively. 

Figure \ref{fig:times} presents the TTSV1 results for all datasets in Table \ref{tab:allexp1}. Similar results for TTSV2 are reported in Figure \ref{fig:ttsv2-times} of the supplementary material. Missing times indicate the algorithm either timed out after an hour or forced an out-of-memory error. For all datasets, the explicit algorithm becomes intractable after $r>3$. The separation between the green and blue lines attest to the significant speedup from considering only unordered blowups. Furthermore, \hyperref[alg:TTSV1G]{\ttsvG} provides additional speedup many orders of magnitude over \hyperref[alg:TTSV1U]{\ttsvU}, even for modest values of $r$. For example, in \texttt{mathoverflow}, the speedup at $r=9$ is about an order of magnitude, and increases to about three orders of magnitude by $r=18.$\footnote{Our implementation utilizes two heuristic approaches which, while not affecting the asymptotic analysis, result in significant performance gains: for sufficiently small $r$ we use a direct convolution approach instead of the FFT~\cite{ismail2009convolution,2020SciPy-NMeth}, and for edges of size $r$ use a direct approach identical to the uniform hypergraph case~\cite{Benson-2018-core}.}

\begin{figure}
    \centering
    \includegraphics[width=0.99\linewidth]{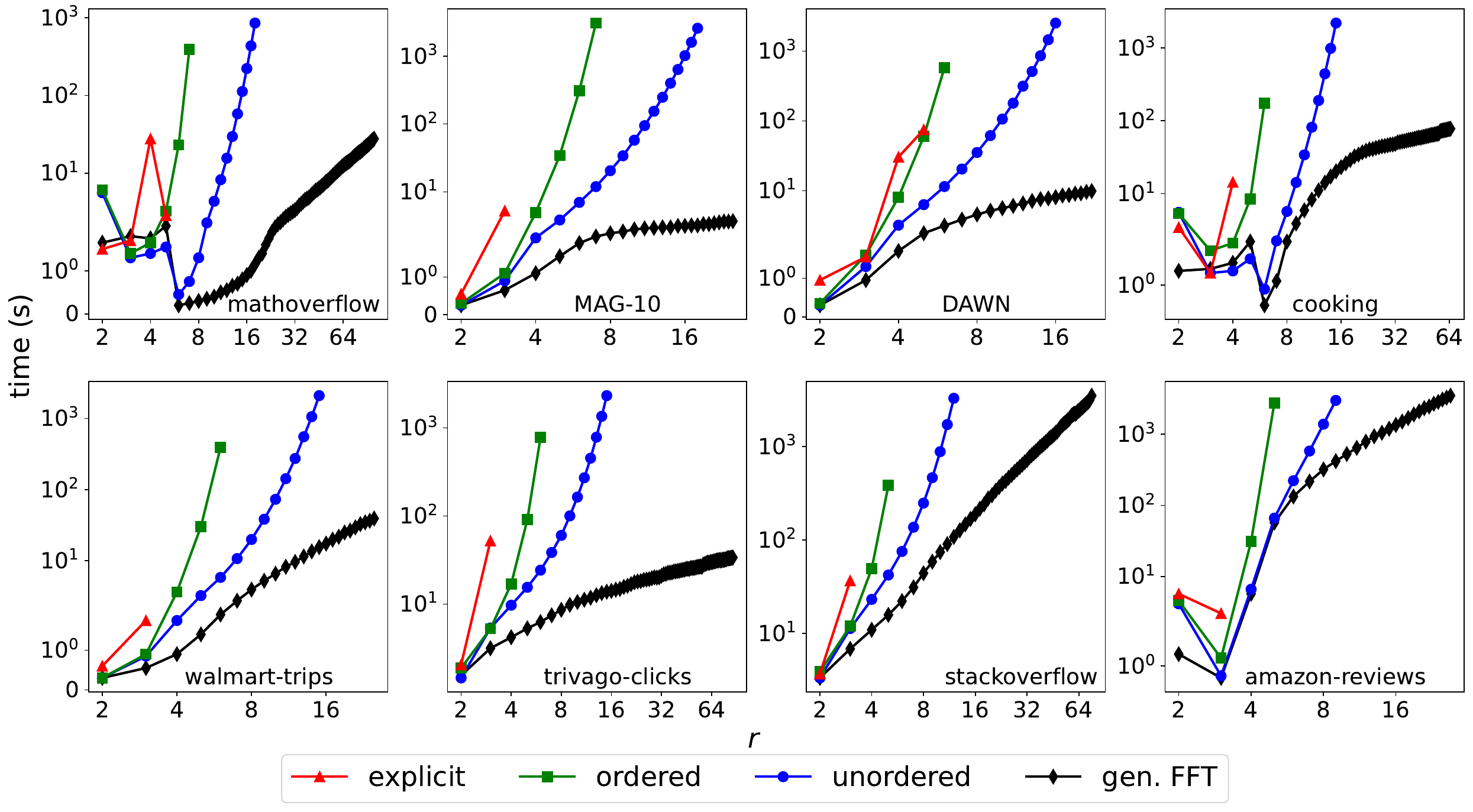}
    \caption{
    Runtimes of TTSV1 algorithms. 
    }
    \label{fig:times}
\end{figure}

Having established the superiority of \hyperref[alg:TTSV1U]{\ttsvU}, we study its performance on the hyperedge level as a function of hyperedge size and $r$. We select \texttt{DAWN} and \texttt{cooking}, as their hyperedge distributions differ substantially.
Figure \ref{fig:ttsv1_by_hyperedge} presents timing results for \hyperref[alg:TTSV1G]{\ttsvG}, as well as the hyperedge size distributions. 
For \texttt{DAWN}, the bulk of compute time for most values of $r$ is spent on edges with sizes $3\leq \lvert e\rvert \leq 6$, yet the per-hyperedge runtime for edges in this range is relatively fast.
These observations are reconciled by the fact that the edge degree distribution is right-skewed. For \texttt{cooking}, the edge size distribution is even more right-skewed, but also has a mode at $\lvert e\rvert = 9$, exceeding that of \texttt{DAWN}, which is 3. As a result, most of the runtime is concentrated around edges of size $8\leq \lvert e\rvert \leq 17$.
However, the per-edge runtime is still dominated by the larger edges for higher values of $r$. Note many edge sizes beyond 36 are not present in the dataset, yielding the white striations in the runtime plots.

\begin{figure}
    \centering
    \includegraphics[width = 0.31\linewidth]{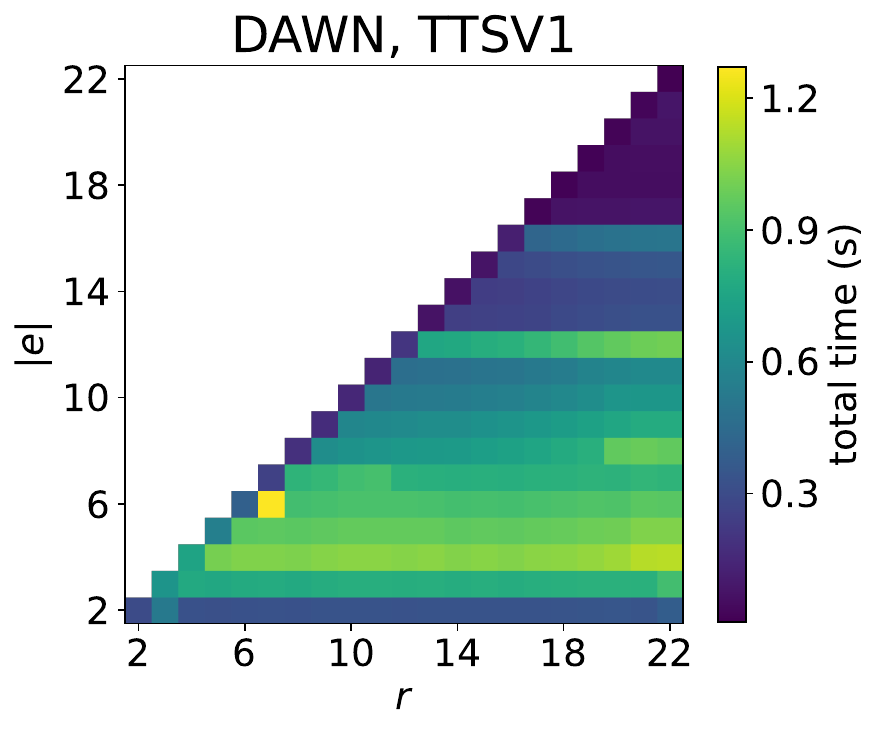}
    \includegraphics[width = 0.33\linewidth]{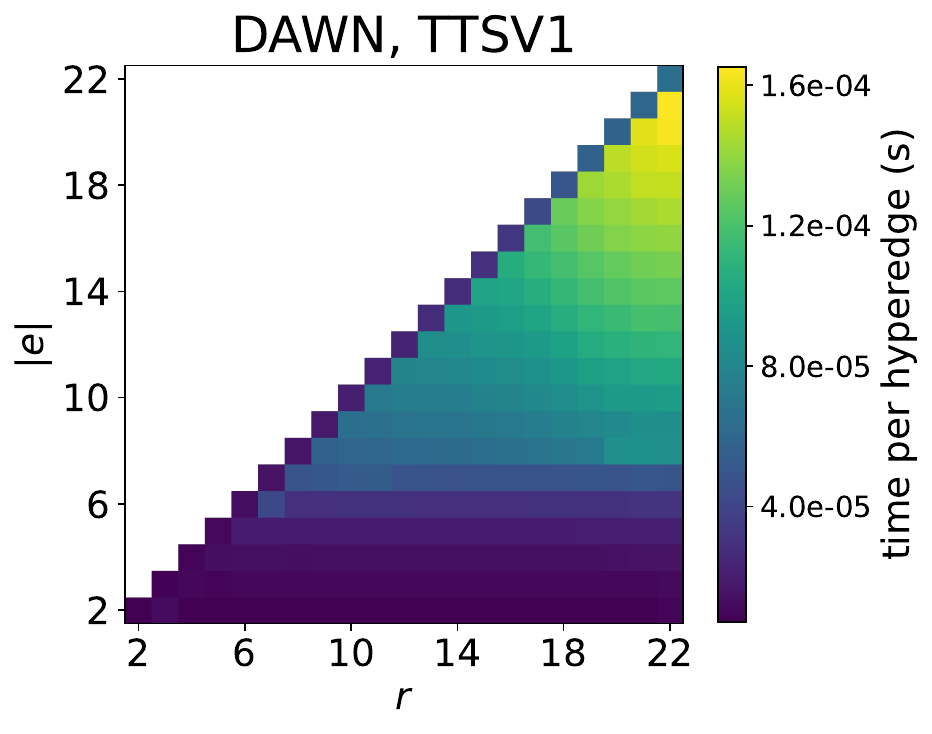}
    \includegraphics[width = 0.25\linewidth]{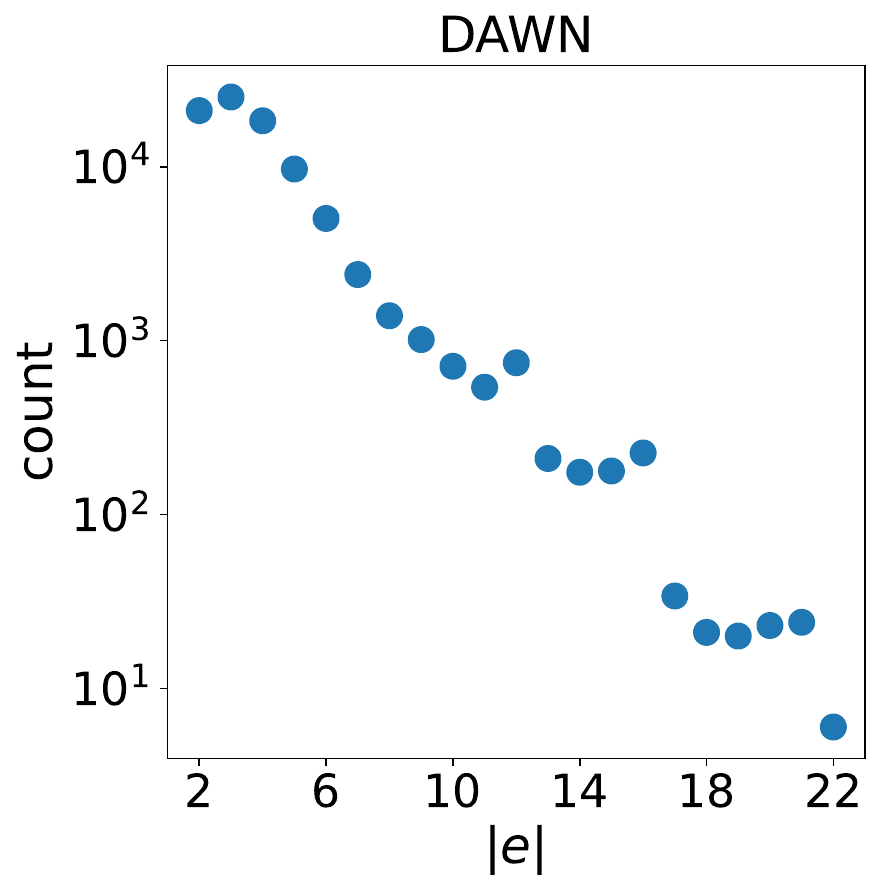}
    \includegraphics[width = 0.31\linewidth]{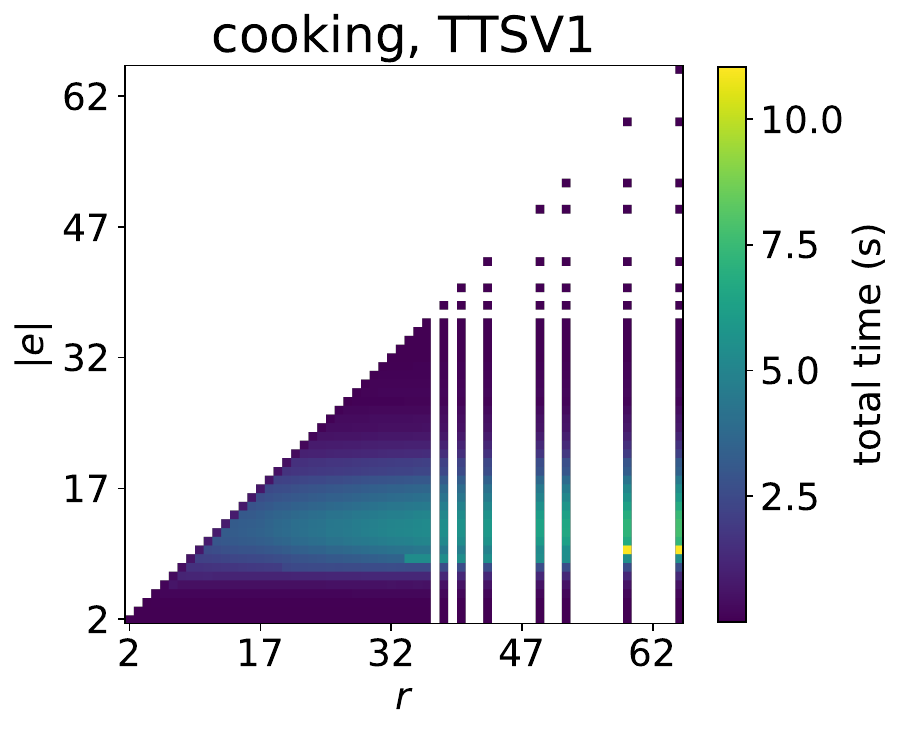}
    \includegraphics[width = 0.33\linewidth]{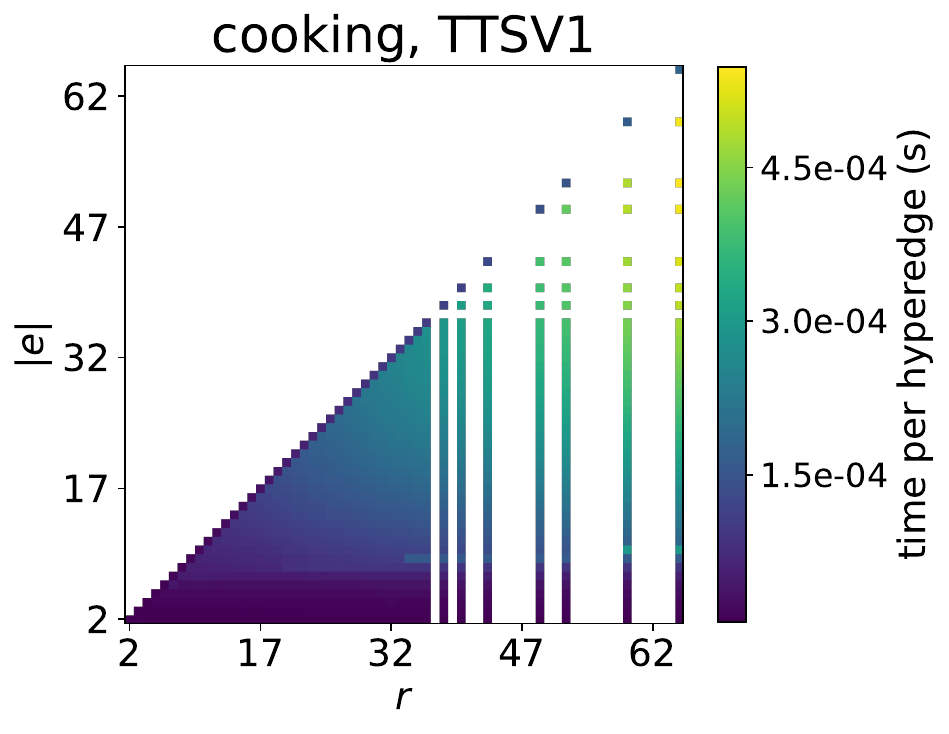}
    \includegraphics[width = 0.25\linewidth]{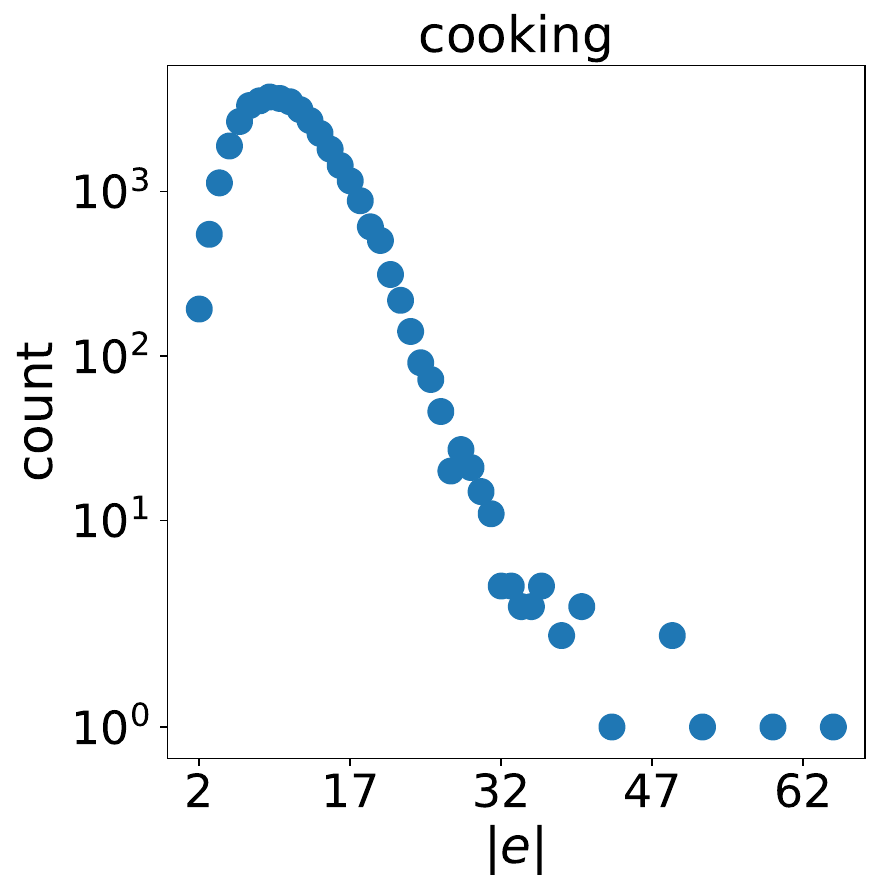}
    \caption{Total (left) and per-edge (middle) runtimes for hyperedge size $\lvert e\rvert$ as a function of rank $r$, and hyperedge size distributions (right) for \texttt{DAWN} and \texttt{cooking}.
    }
    \label{fig:ttsv1_by_hyperedge}
\end{figure}

\subsection{Centrality}
Our TTSV algorithms may be utilized to compute nonlinear hypergraph centralities. Here, we discuss several such centrality measures in the nonuniform setting, apply the relevant Perron-Frobenius theory guaranteeing their existence, and then compute them\footnote{Our centrality code is available at \url{https://github.com/pnnl/GENTTSV}.} on data and illustrative toy examples to show they yield meaningful, complementary information to existing centrality measures. We focus on the tensor-based $Z$-eigenvector centrality (ZEC) and $H$-eigenvector centrality (HEC) introduced by Benson \cite{benson2019three} for \textit{uniform} hypergraphs. Here, the centrality $c_u$ of node $u$ depends on the sum of {products} of its neighbors' centralities. 
Using the formalism of hyperedge blowups, we note this same concept may be applied to non-uniform hypergraphs where ZEC and HEC satisfy 

\begin{align*}
\sum_{e\in E(u)}\sum_{x \in \beta(e) }\left[\tfrac{1}{c_u}\prod_{v\in x}c_v\right] &= \lambda c_u,\\
\sum_{e\in E(u)}\sum_{x \in \beta(e) }\left[\tfrac{1}{c_u}\prod_{v\in x}c_v\right] &= \lambda c_u^{r-1},
\end{align*}
respectively. Note these differ in that the second preserves dimensionality on both sides of the equation. Both of these problems can be cast as eigenvector problems whose existence, as we soon show, is guaranteed if the hypergraph is connected. 

\begin{definition}[$Z$- and $H$-eigenvector centrality, (ZEC and HEC)]
Let $H$ be a connected, rank $r$ hypergraph with adjacency tensor $\A$. The $Z$- and $H$-eigenvector centrality of $H$ is given by a vector $\Vc{c}$ with $||\Vc{c}||_1=1$ satisfying
\begin{align*}
    \A\Vc{c}^{r-1} &= \lambda \Vc{c}, \\
    \A\Vc{c}^{r-1} &= \lambda \Vc{c}^{[r-1]},
\end{align*}
respectively, for some eigenvalue $\lambda>0$.
\end{definition}

While the basic pattern of nonlinearity afforded by these methods is the same in the uniform and nonuniform hypergraph settings, the intuition behind these measures differs slightly. For ZEC in the uniform setting, the centrality of a node is proportional to the sum of products of the centralities of all nodes in edges that contain it. In the nonuniform setting, however, the importance of a node derives from multiplying the centralities of other nodes within all possible {\it blowups} of the containing hyperedges. Despite this apparent ``uneveness" in how node importance is computed in the product, considering all possible blowups ensures there is no artificial bias to any node within the hyperedge. HEC proceeds similarly, except the polynomials on both sides of the equation have the same degree for every term -- a desirable property whenever dimensionality preservation is important, such as in physics-based applications.

The Perron-Frobenius theory guarantees the existence of the $Z$- and $H$-eigenvectors for the nonuniform hypergraph adjacency tensor $\A$ as follows: recall, from~\cite{qi2017tensor}, that an order $k$, $n$-dimensional tensor $\T{X}$ is {\it irreducible} if the associated directed graph $(V,E)$ with $V=\{1,\dots,n\}$ and
\[
E=\{(i,j) : \mbox{ there exists } I=\{i_2,\dots,i_n \} \mbox{ with } j \in I \mbox{ and } \T{X}_{ii_2\dots i_k} \neq 0\},
\]
is strongly connected. Via the symmetry of $\A$, it easily follows that its associated digraph is precisely the hypergraph's clique expansion, in which every directed edge $(i,j)$ is reciprocated by an edge $(j,i)$. Consequently, defining a connected hypergraph as one whose clique expansion is connected, the Perron-Frobenius theorem for nonnegative tensors \cite[Theorem 3.11, p. 50]{qi2017tensor} is applicable to connected, nonuniform hypergraphs as follows.

\begin{theorem}[Perron-Frobenius theorem for the hypergraph adjacency tensor \cite{qi2017tensor}]
If $H$ is a connected, rank $r$ hypergraph with adjacency tensor $\A$, then there exists
\begin{itemize}
    \item $Z$-eigenpair $\lambda>0,\Vc{x}>0$ satisfying $\A \Vc{x}^{r-1}=\lambda\Vc{x}$, and
    \item $H$-eigenpair $\lambda>0,\Vc{x}>0$ satisfying $\A\Vc{x}^{r-1}=\lambda\Vc{x}^{[r-1]}$ where $\lambda$ is the largest $H$-eigenvalue of $\A$ and $\Vc{x}$ is unique up to scaling.
\end{itemize}
\end{theorem}

\begin{center}
\begin{minipage}[t]{0.475\linewidth}
  \vspace{0pt}  
  \phantomsection\label{alg:BG}
\begin{algorithm}[H]\small
  \LinesNotNumbered 
\SetKwRepeat{Repeat}{repeat}{until}
\KwData{$n$-vertex, rank $r$ hypergraph $H$, tolerance $\tau$, step size $h$}
\KwResult{Z-eigenvector centrality, $\Vc{x}$}
$\Vc{y}=\frac{1}{n}\cdot \Vc{1}$\;

\Repeat{$\frac{\max \Vc{g} - \min \Vc{g}}{\min \Vc{g}}<\tau$}{
$\Mx{Y}=\ttsv[2](H,\Vc{y})$\;

$\Vc{d} = \mbox{dominant eigenvector of } \Mx{Y}$\;

$\Vc{x} = \Vc{y}+h\cdot(\Vc{d}-\Vc{y})$\;

$\Vc{g}=\Vc{x}\oslash \Vc{y}$\;

$\Vc{y} = \Vc{x}$\;
}
\Return{$\Vc{x}$}
\caption*{\textsc{BG}:  $Z$-eigenvector centrality}\label{alg:zeig}
\end{algorithm} 
\end{minipage}\hfil \hfil
\begin{minipage}[t]{0.475\linewidth}
 \vspace{0pt}
 \phantomsection\label{alg:NQZ}
 \begin{algorithm}[H]\small
  \LinesNotNumbered 
\KwData{$n$-vertex, rank $r$ hypergraph $H$, tolerance $\tau$}
\KwResult{H-eigenvector centrality, \Vc{x}}
$\Vc{y}=\frac{1}{n} \cdot \Vc{1}$\;

$\Vc{z}=\ttsv[1](H,\Vc{y})$\;

\Repeat{$(\lambda_{\max} -\lambda_{\min})/\lambda_{\min} < \tau$}
{
$\Vc{x}=\Vc{z}^{\left[\frac{1}{r-1}\right]} / \norm{\Vc{z}^{\left[\frac{1}{r-1}\right]}}_1$\;

$\Vc{z}=\ttsv[1](H,\Vc{x})$\;

$\lambda_{\min} = \min{(\Vc{z} \oslash \Vc{x}^{\left[r-1\right]})}$\;

$\lambda_{\max} = \max{(\Vc{z} \oslash \Vc{x}^{\left[r-1\right]})}$\;
}
\Return{$\Vc{x}$}
\caption*{\textsc{NQZ}: $H$-eigenvector centrality}\label{alg:heig}
\end{algorithm}
\end{minipage}
\end{center}

Observe here that uniqueness up to scaling is only guaranteed for $H$-eigenpairs, the theory of which is generally stronger than for the $Z$-counterparts. 
Having established the necessary theory, we now provide implementation details.  ZEC is computed using a dynamical system approach proposed by Benson and Gleich in~\cite{benson2019computing}.  Specifically, we have 
\[
[\A\Vc{x}^{r-2}]\Vc{x}=\lambda \Vc{x} \iff \A\Vc{x}^{r-1}=\Vc{x},
\]
which is the same as requiring that the system 
\[
\frac{dx}{dt}=\Lambda(\A\Vc{x}^{r-2})-\Vc{x}
\]
has a steady state solution, where $\Lambda$ maps a matrix to its dominant eigenvector. Consequently, any forward integration scheme can be applied together with an eigenvector solver to get a tensor eigenvector through a nonlinear matrix eigenvector problem.  \hyperref[alg:BG]{\textsc{BG}} presents a concrete instantiation of the Benson-Gleich approach which utilizes \ttsv[2] as a subroutine.
 For HEC, we use a power-iteration like method along the lines of Ng, Qi, and Zhou~\cite{ng2010finding}, presented in \hyperref[alg:NQZ]{\textsc{NQZ}}, which relies on the calculation of \ttsv[1] as a subroutine.

\begin{figure}
    \centering
    \includegraphics[width=0.45\linewidth]{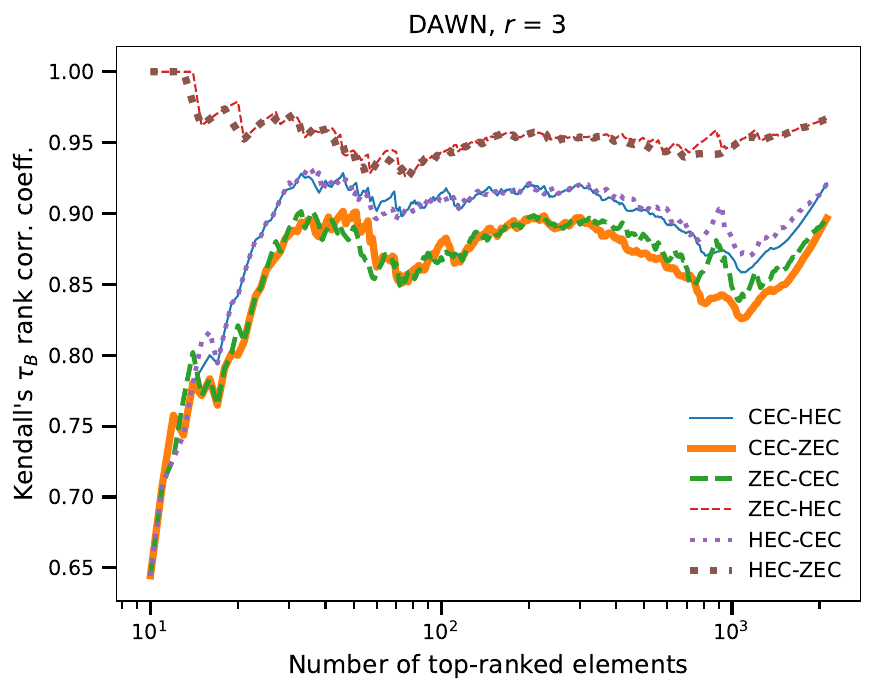} 
    \includegraphics[width=0.45\linewidth]{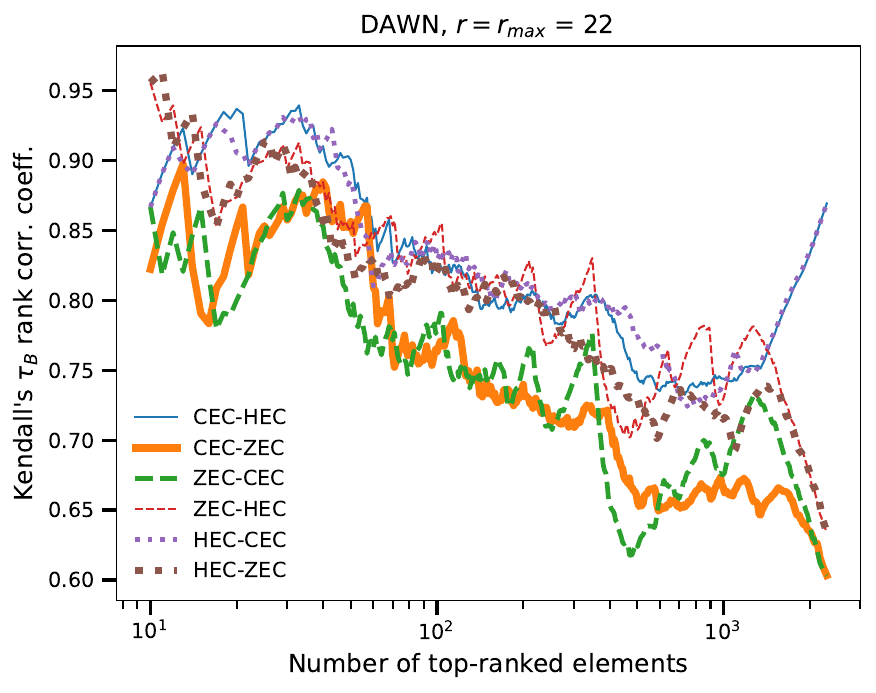}
    \includegraphics[width=0.45\linewidth]{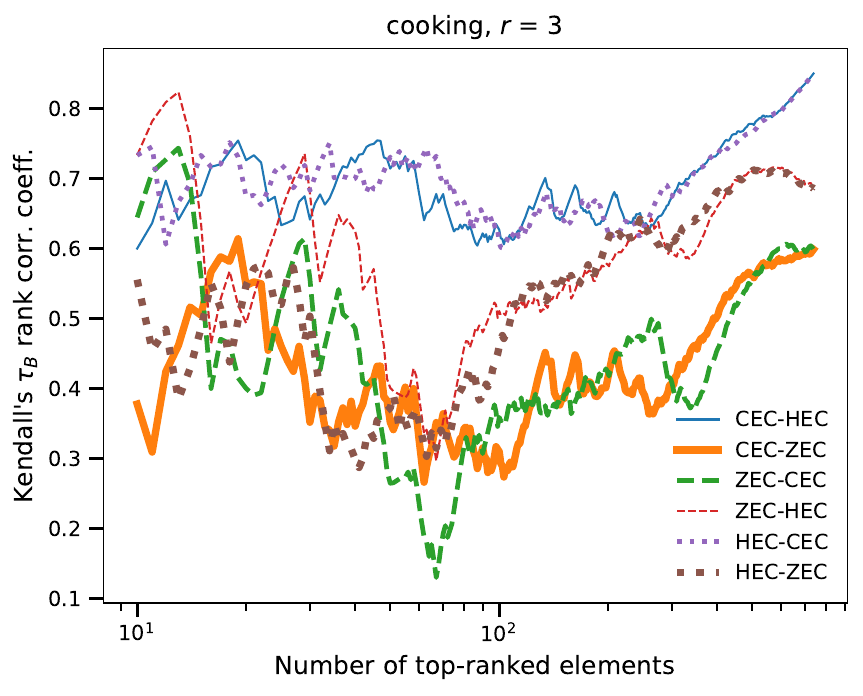}
    \includegraphics[width=0.45\linewidth]{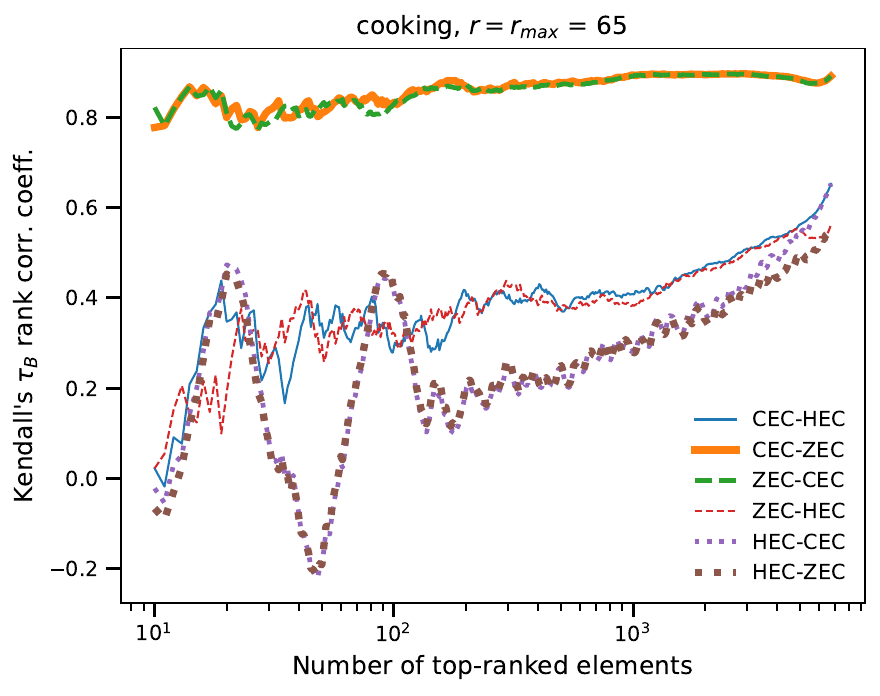} \\
    \caption{Kendall $\tau_B$ rank correlation coefficient of ZEC, HEC, and CEC for the top $k$  nodes. 
    }
    \label{fig:kendall}
\end{figure}

Applying these algorithms to \texttt{DAWN} and \texttt{cooking}, we now address three questions: (1) do our nonuniform hypergraph centrality measures provide different centrality scores in practice from existing methods, (2) how do these rankings change as we vary the maximum hyperedge size $r$, and (3) can these measures detect higher-order structure in hypergraphs that is inexpressible in graphs and therefore undetectable by clique expansion and associated approaches? Section~\ref{sec:case-study} presents additional case-study for \texttt{cooking} and \texttt{DAWN}.

\subsubsection{Tensor centralities provide complementary information}\label{sec:different}
We compare ZEC and HEC scores against each other, and against clique expansion centrality (CEC): the dominant eigenvector of the weighted clique expansion adjacency matrix \cite{benson2019three}. We compare the ordinal rankings induced by ZEC, HEC, and CEC by computing Kendall's $\tau_B$ rank correlation coefficient among the top $k$ ranked vertices. Figure \ref{fig:kendall} presents the results for \texttt{DAWN} and \texttt{cooking} for both the full, unfiltered data ($r=22$ and $65$, respectively), as well as for the $r=3$ filtering. We observe CEC rankings are relatively uncorrelated with those of HEC and ZEC, and that no pair of measures exhibits a consistent level of correlation among the top $k$ ranked vertices. When compared against each other, ZEC and HEC are either weakly correlated or uncorrelated, suggesting they provide different information in practice. Lastly, the differences between $r=3$ and the unfiltered data suggest these correlations are sensitive to filtering.

\begin{figure}
    \centering
     \includegraphics[width=0.3\linewidth]{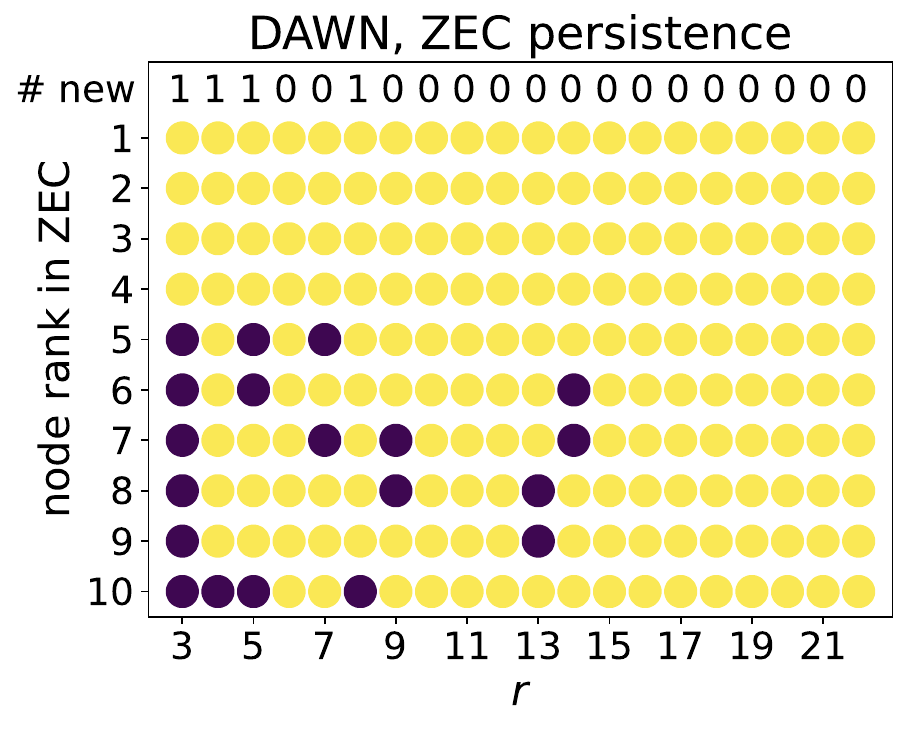}\includegraphics[width=0.3\linewidth]{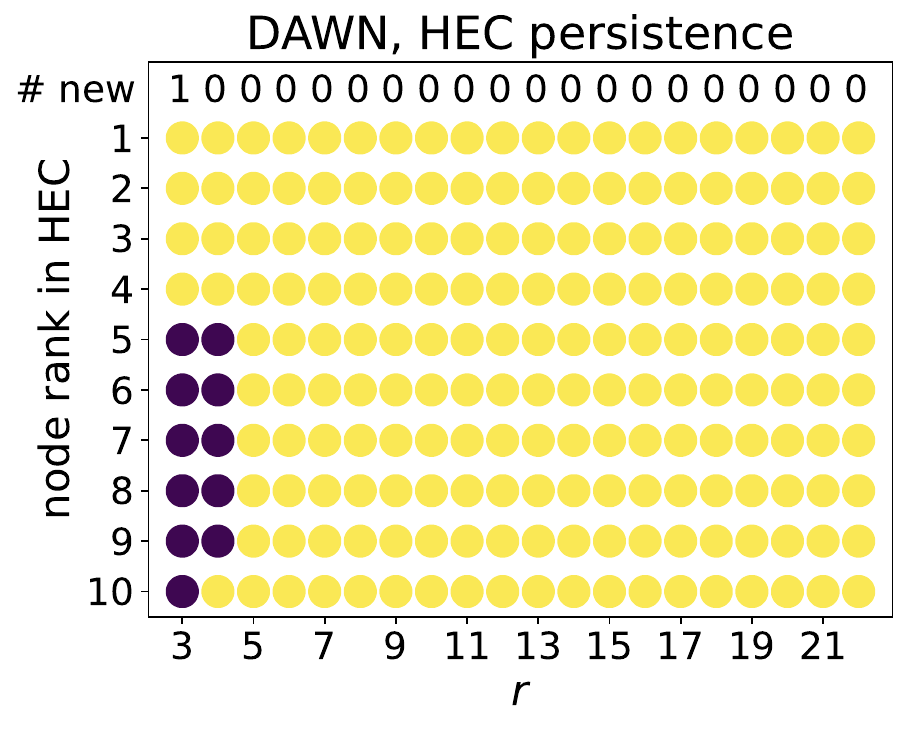}\includegraphics[width=0.3\linewidth]{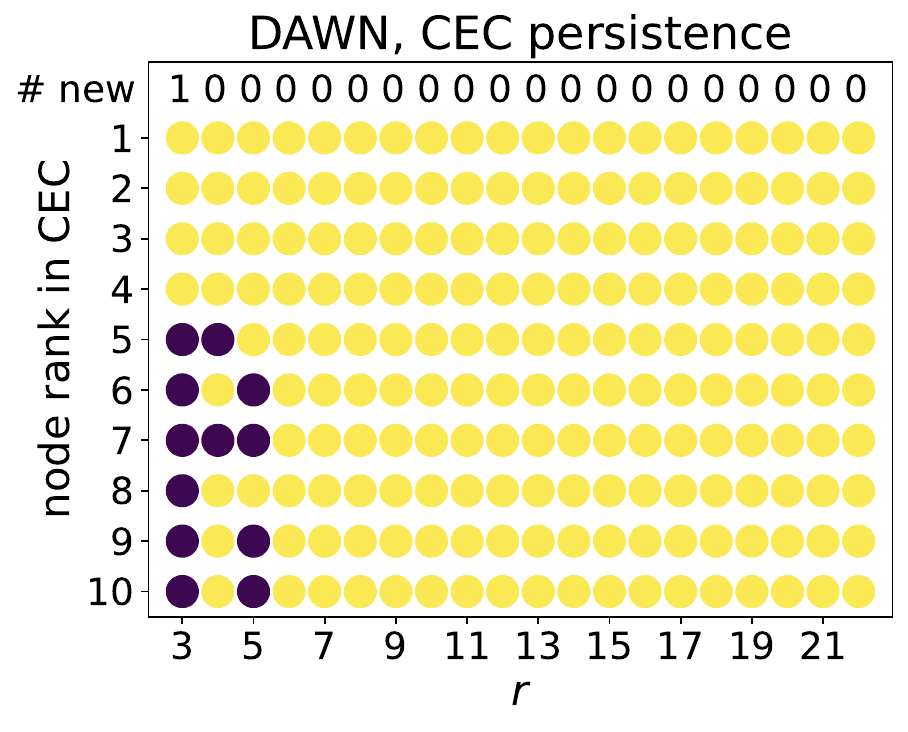}
    \includegraphics[width=0.3\linewidth]{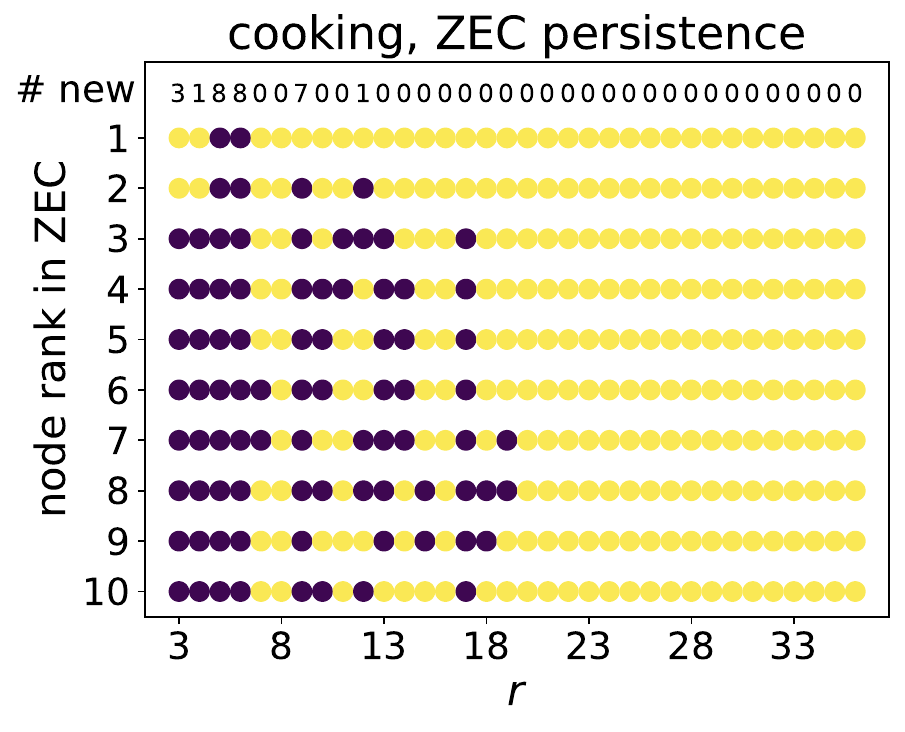}\includegraphics[width=0.3\linewidth]{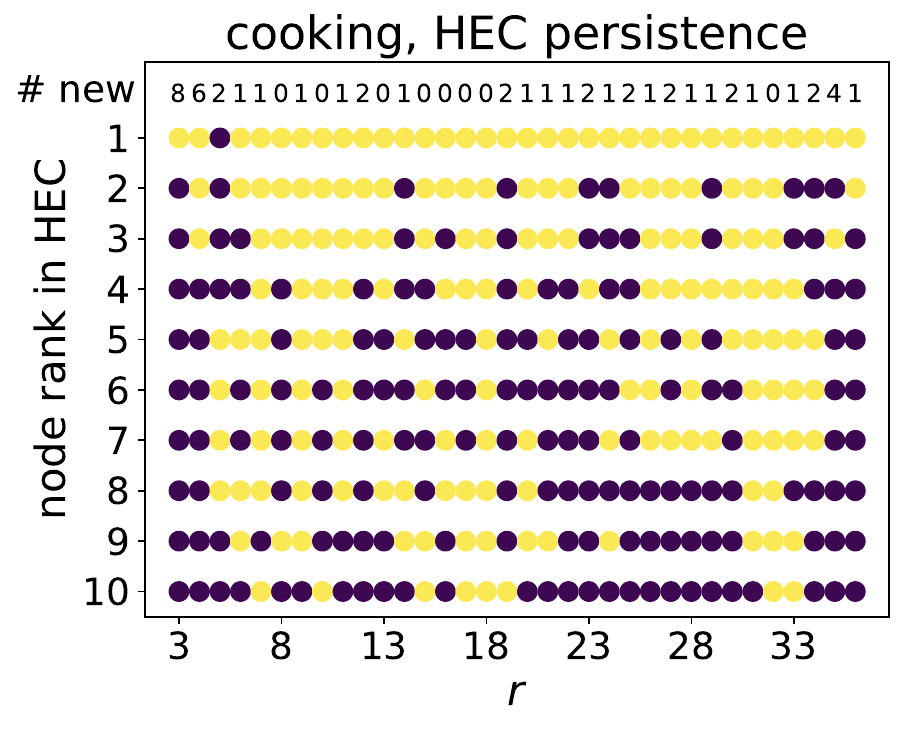}\includegraphics[width=0.3\linewidth]{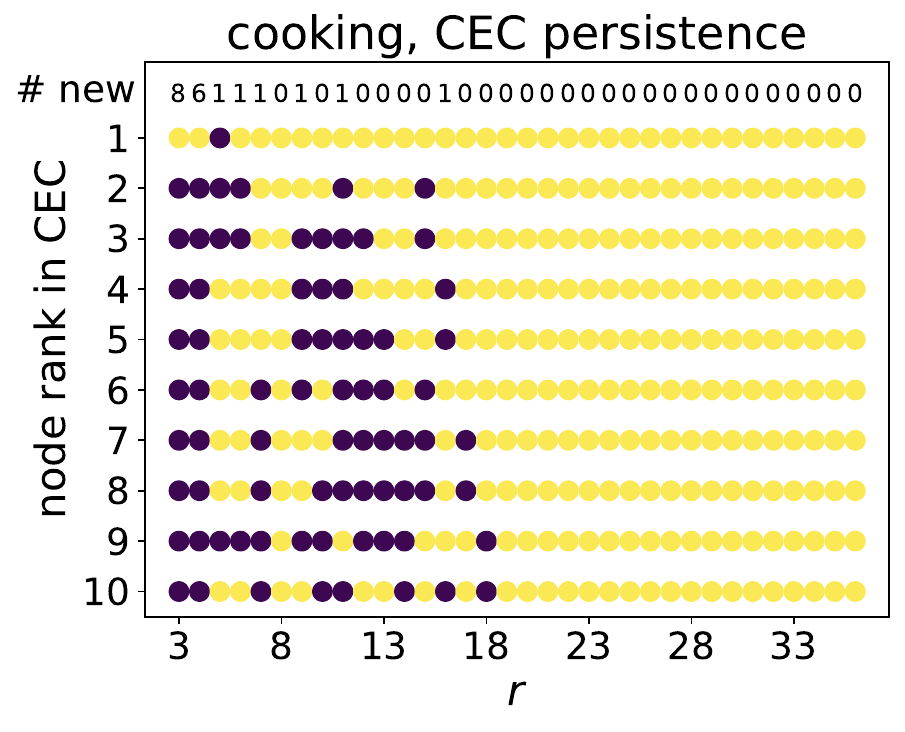}
    \caption{Node rank persistence of the top 10 nodes under ZEC, HEC, and CEC for \texttt{DAWN} and \texttt{cooking}. Purple indicates a rank change from $r-1$ and yellow indicates no change. }
    \label{fig:persistence}
\end{figure}

\subsubsection{Persistence in tensor centrality}\label{sec:persistence}
Next, we study the persistence of rankings induced by ZEC, HEC, and CEC. In particular, we perform an LEQ filtering sweep for $r$ ranging from 2 to the maximum hyperedge size, and record whether the rank of each of the top 10 nodes changes as we increase $r$. Figure \ref{fig:persistence} presents the results for \texttt{DAWN} and \texttt{cooking}, where purple indicates rank change from $r-1$, yellow indicates no change, and the top row lists the number of new nodes in the top 10. For \texttt{DAWN}, CEC and HEC rankings quickly stabilize, showing no changes after $r=5$ and $4$, respectively. In contrast, ZEC rankings stabilize more slowly at $r=14$. For all three centrality measures, however, stabilization occurs before the maximum hyperedge size at $r=22$, echoing the claim in~\cite{benson2019three} that higher-order information is sometimes well-captured by hyperedges that are ``medium" to ``small" relative to the largest hyperedge. Consequently, for larger data (such as \texttt{mathoverflow}) where the maximum hyperedge size is prohibitively large for our algorithms, analyses may still be satisfactorily performed on a filtering to smaller hyperedges. 
This is, of course, dataset and question-dependent, as evidenced by the persistence results for \texttt{cooking}: here, HEC rankings continue to show instability across larger $r$ whereas ZEC and CEC rankings both stabilize around $r=18$. 
This highlights how these tensor centralities can differ from matrix-analogs, as well as from each other.

\begin{figure}
    \centering
    \subfloat[Hypergraph $S$]{%
        \includegraphics[width=0.45\linewidth]{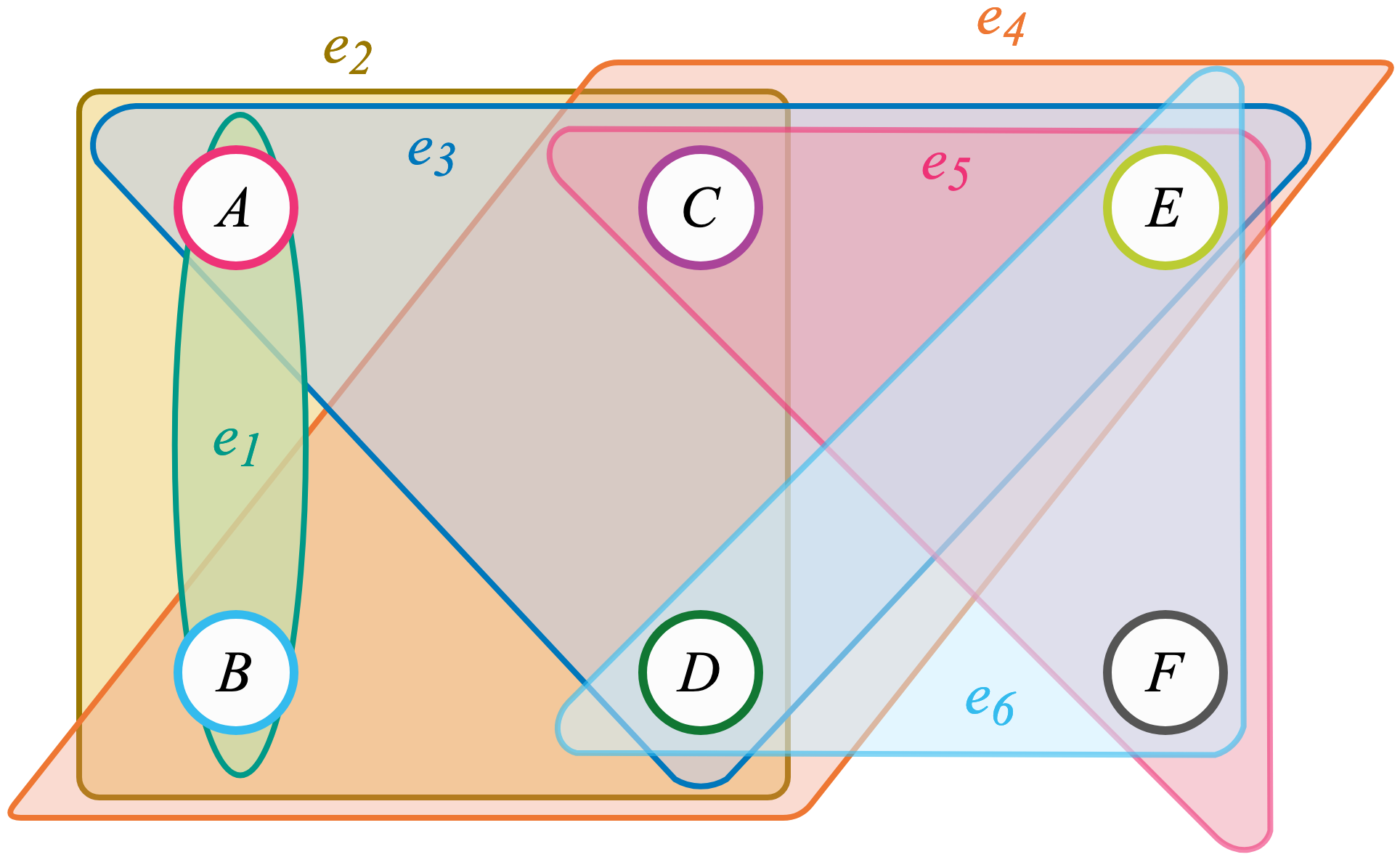}
    }
    \subfloat[Hypergraph $R$]{%
       \includegraphics[width=0.45\linewidth]{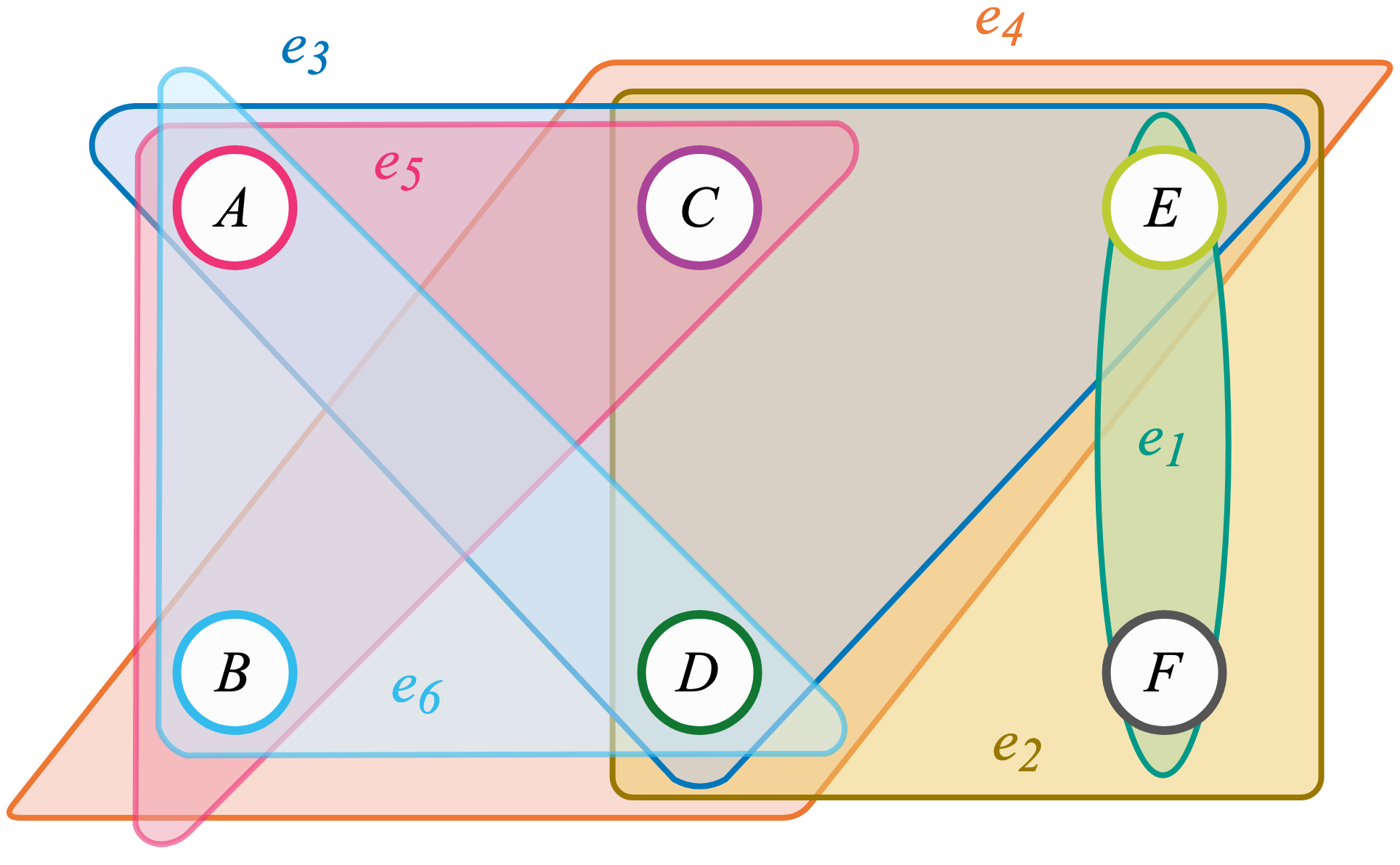}
    } \\
        \subfloat[Weighted clique expansion of $S$ and $R$]{%
        \includegraphics[width=0.45\linewidth]{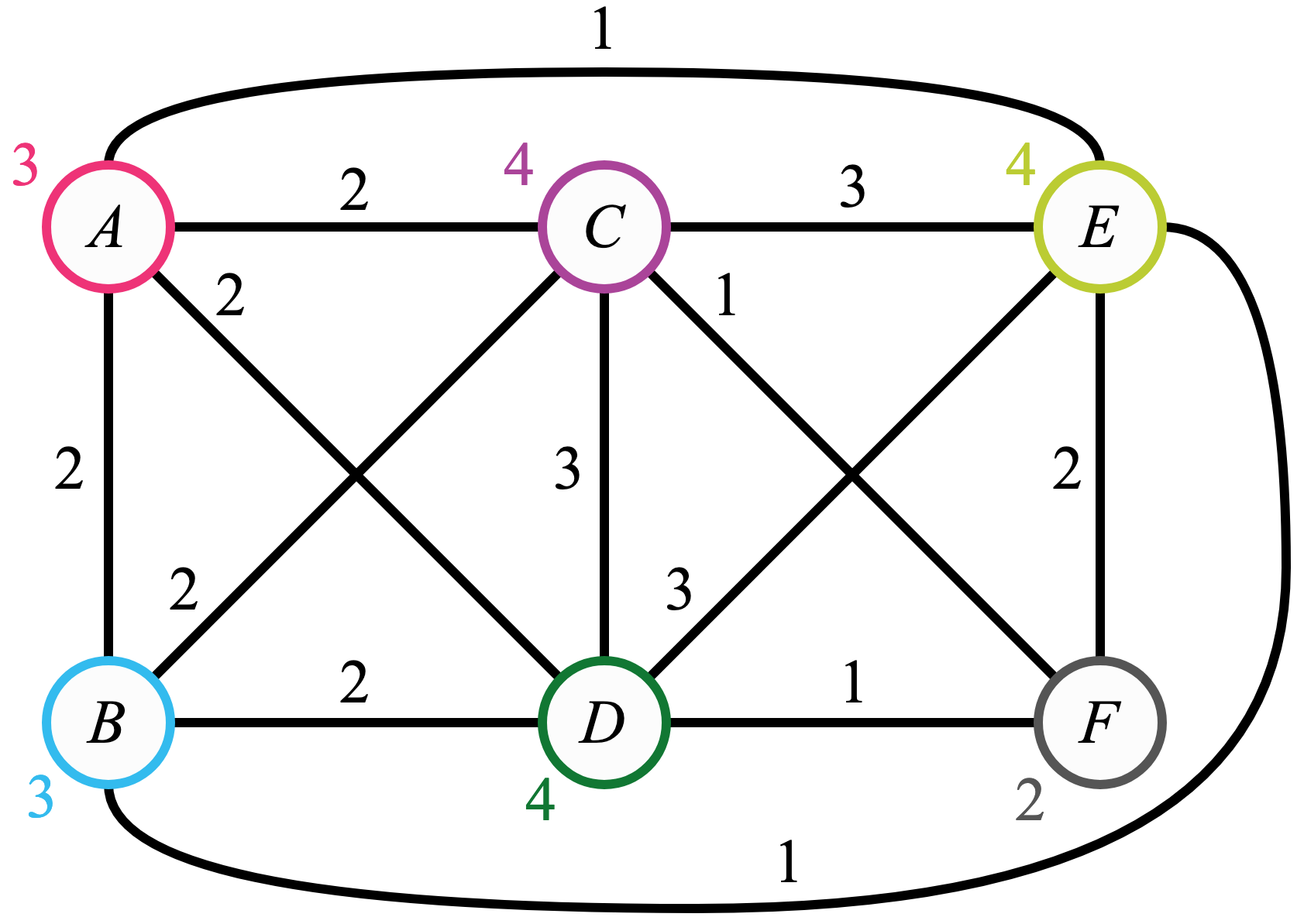}
    }
    \subfloat[Weighted line graph of $S$ and $R$]{%
       \includegraphics[width=0.45\linewidth]{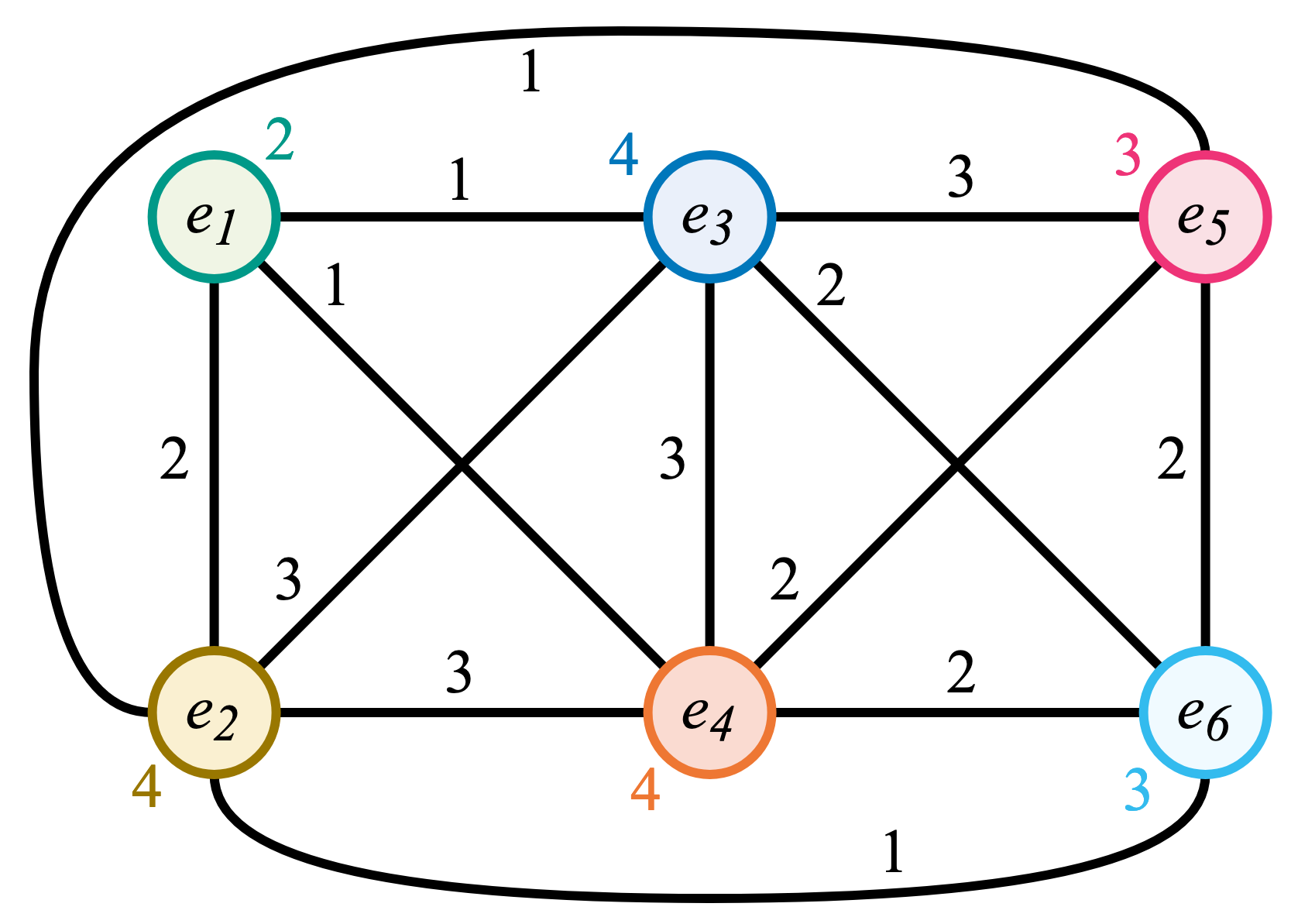}
    }
    \caption{Two nonisomorphic hypergraphs with identical weighted clique expansions and identical line graphs.}
    \label{fig:gram}
\end{figure}
\subsubsection{Tensor centrality distinguishes Gram mates}\label{sec:gram}

Having shown the tensor-based ZEC and HEC provide different information than the matrix-based CEC, we now investigate whether ZEC and HEC capture higher-order structure that is {\it inexpressible} by the hypergraph's clique expansion graph.
To address this more nuanced question, we analyze highly structured families of hypergraphs called {\it Gram mates} \cite{kim2022gram, kirkland2018two}. Gram mates are pairs of hypergraphs having incidence matrices $S$ and $R$ satisfying
\begin{align*}
SS^T &= RR^T, \\
S^TS &= R^T R.
\end{align*}
Interpreted combinatorially, $SS^T=RR^T$ means the codegree of any pair of vertices in $S$ is the same as that in $R$, thereby yielding identical weighted clique expansions. 
Similarly, $S^TS=R^TR$ guarantees each pair of hyperedges has the same intersection cardinality in one hypergraph as in the other, meaning their weighted line graphs are identical. Figure \ref{fig:gram} presents a small example derived from \cite{kim2022gram,mizutani2023information} of non-isomorphic Gram mate hypergraphs alongside their weighted clique expansion and line graphs.  We emphasize many existing hypergraph measures and matrices cannot distinguish between these two hypergraphs. For example:

\begin{itemize}
    \item The singular values of the incidence matrices $S$ and $R$.
    \item Bolla \cite{bolla1993spectra} and Rodriguez's \cite{rodri2002laplacian} hypergraph Laplacian, Cardoso's signless Laplacian \cite{cardoso2022signless}, the hypergraph adjacency matrix \cite{ cardoso2022adjacency,sole1996spectra}, and $s$-line graphs \cite{aksoy2020hypernetwork}. 
    \item Gibson's dynamical system for categorical data and hypergraph clustering \cite{gibson2000clustering}.
    \item The hypergraph core/periphery, structural equivalence, and centrality methods derived from the ``dual-projection" approach advocated for in \cite{everett2013dual}.
    \item Bipartite projection based analyses, such as bipartite modularity \cite{arthur2020modularity}.
\end{itemize}

In contrast to the above, ZEC and HEC do distinguish between the two hypergraphs in Figure \ref{fig:gram}. In particular, letting $\Vc{x}_S$ and $\Vc{x}_R$ denote the centrality vectors for either ZEC or HEC applied to hypergraphs $S$ and $R$, we have that 
\begin{align*}
     \Vc{x}_S(u)&=\Vc{x}_R(u) \mbox{ for } u=C,D. \\
    \Vc{x}_S(u)&>\Vc{x}_R(u) \mbox{ for } u=E,F. \\
    \Vc{x}_S(u)&<\Vc{x}_R(u) \mbox{ for } u=A,B.
\end{align*}

\subsection{Clustering}\label{subsec:clustering}

Our \ttsv\ algorithms also enable computation of hypergraph tensor embeddings, which may then serve as features for many clustering algorithms, such as $k$-means. Following this approach, we aim to embed the hypergraph adjacency tensor $\A$ in $\R^{n\times q}$, where $q$ is the target embedding dimension, so that each node is represented by a $q$-dimensional vector. We perform the embedding by finding a symmetric CP-decomposition~\cite{hitchcock1927expression,kolda2015numerical} of $\A$, meaning we seek an $n\times q$ matrix $\Mx{E}$ and a vector $\Vc{\lambda}\in \R^q$ such that the tensor norm given by 
\begin{equation}f(\Vc{\lambda},\Mx{E})=||\A-\T{X}||\mbox{ with }\T{X}=\sum_{j=1}^{q}\Vc{\lambda}_j\Mx{E}_j^{\otimes r}\label{eqn:objective}\end{equation}
is minimized, where $\Mx{E}=[\Mx{E}_1 \Mx{E}_2 \cdots \Mx{E}_q]$ and $\Mx{E}_j^{\otimes r}$ is the $r$-way tensor outer product of $\Mx{E}_j$ with itself. To optimize Eq.~\ref{eqn:objective}, we employ a standard first-order optimization scheme and utilize the  closed-form expressions \cite{kolda2015numerical} for the gradients
\begin{align*}
\frac{\partial f}{\partial \Vc{\lambda}_j} &= -2\left[\A\Mx{E}_j^r-\sum_{k=1}^q\Vc{\lambda}_k\langle \Mx{E}_j,\Mx{E}_k\rangle ^r\right],\\
\frac{\partial f}{\partial \Mx{E}_j} &= -2d\Vc{\lambda}_j\left[\A\Mx{E}_j^{r-1}-\sum_{k=1}^q\Vc{\lambda}_k\langle \Mx{E}_j,\Mx{E}_k\rangle ^{r-1}\Mx{E}_k\right],
\end{align*}
where $\langle \Mx{E}_j,\Mx{E}_k\rangle=\Mx{E}_j^T\Mx{E}_k$, $\A\Mx{E}_j^r=\sum_{i_1=1}^n\cdots\sum_{i_r=1}^n\A_{i_1,\dots,i_r}\prod_{k=1}^r\Mx{E}_{j_{i_k}}$ is the \ttsv\ operation which results in a scalar.
 Note the \ttsv\ value is obtained from the \ttsv[1] vector simply by taking an inner product with $\Mx{E}_j.$ In the first order scheme, computing $f$ and its derivatives explicitly requires $O(qn^r)$ time, but we use \hyperref[alg:TTSV1G]{\ttsvG[1]}, together with the gradient computation approach outlined in~\cite{sherman2020estimating}, to cut this time down to $O(\ttsv[1]+ nq^2)$, where $O(\ttsv[1])$ is the worst-case runtime of \hyperref[alg:TTSV1G]{\ttsvG[1]}. After obtaining this CP decomposition for $\A$, the resulting embedding $E\in\R^{n\times q}$ may be used as features for a standard $k$-means clustering algorithm~\cite{hartigan1979algorithm} or more generally within any metric-space clustering framework. 
\begin{figure}
    \centering
    \begin{tabular}{cc}
        \subfloat[\scriptsize t-SNE of embedding for the normalized Laplacian of the clique expansion (left) vs.~normalized Laplacian tensor of the hypergraph (right) of \texttt{cooking} \label{fig:tsne-cooking}]{\includegraphics[width=0.23\linewidth]{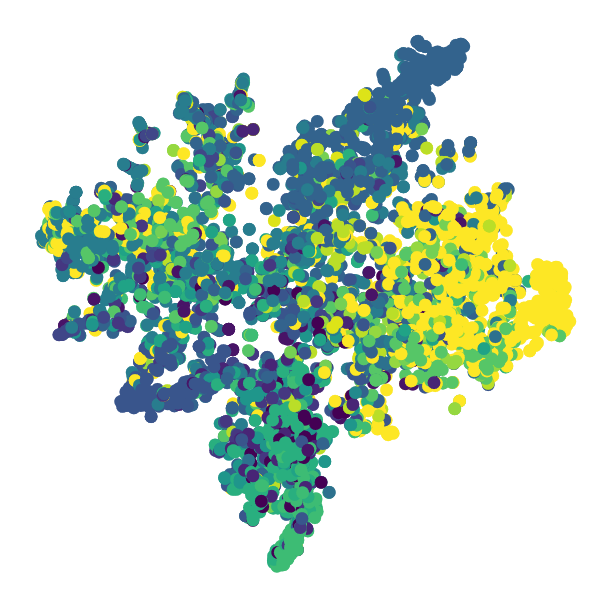}\includegraphics[width=0.23\linewidth]{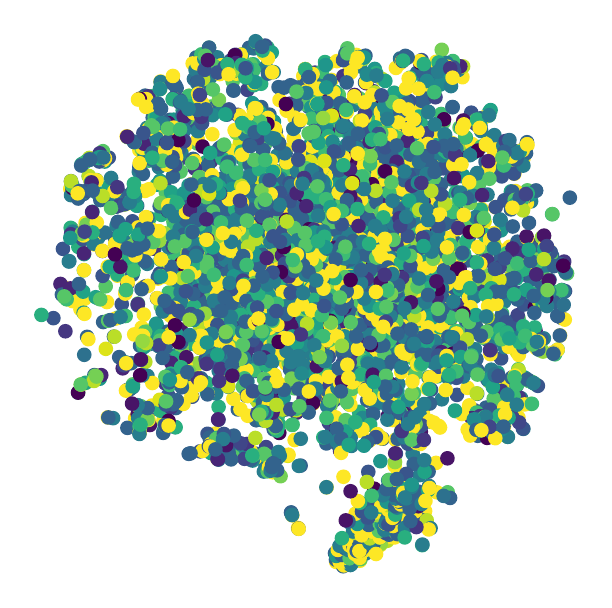}} &  \subfloat[\scriptsize t-SNE of embedding for the normalized Laplacian of the clique expansion (left) vs.~normalized Laplacian tensor of the hypergraph (right) for \texttt{DAWN} \label{fig:tsne-DAWN}]{\includegraphics[width=0.23\linewidth]{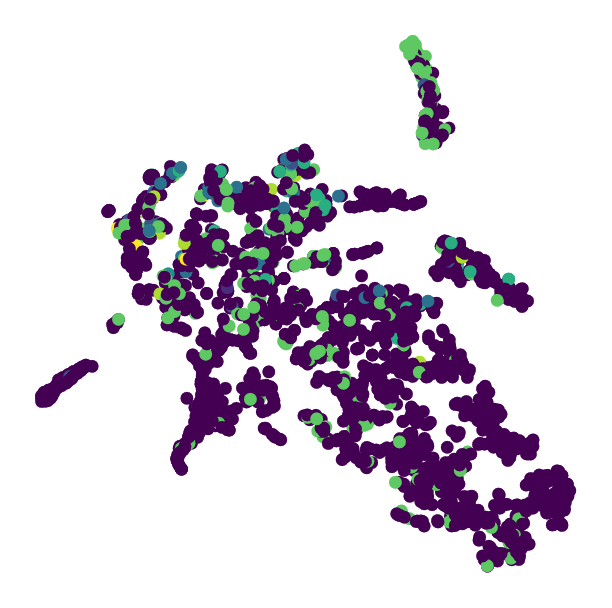}\includegraphics[width=0.23\linewidth]{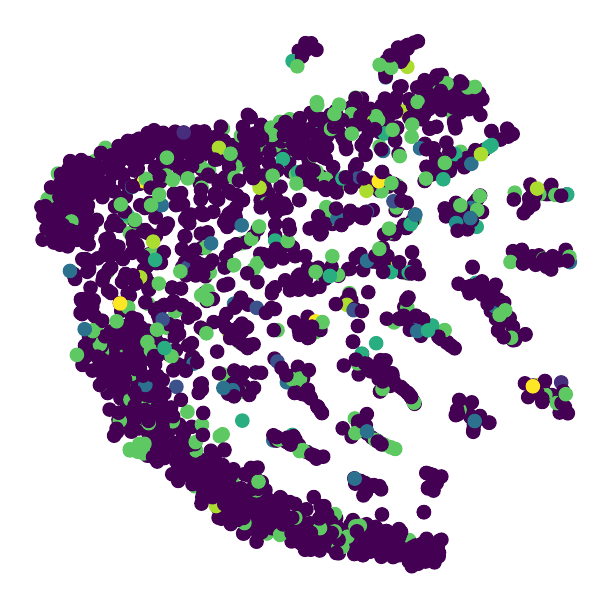}}
    \end{tabular}
    \caption{Comparison of matrix~\cite{zhou2006learning} to tensor embedding of \texttt{cooking} (left) and \texttt{DAWN} (right).}
    \label{fig:global_measures}
\end{figure}
Instead of clustering $\A$ directly, we cluster the corresponding normalized Laplacian tensor $\LL$ from \cite{BANERJEE201714}, given by
\[
\LL_{p_1\dots p_{r}}=\begin{cases}-\left[\prod_{j=1}^{r}d(v_{p_j})^{-1/r}\right]\frac{\lvert e\rvert}{\lvert \beta(e)\rvert}&\mbox{ if } p_1\dots p_r \in \beta(e) \\
1 &\mbox{ if } p_1 = p_2 = \cdots = p_r \\
0 &\mbox{ otherwise}
\end{cases},
\]
It is worth noting that $\LL$ does not equally weight all blowups of an edge, and so \hyperref[alg:TTSV1gen]{\ttsvG} can not be directly applied.  However, if $\Vc{d}$ is the vector of all degrees we have that $\LL \Vc{x}^r = \T{I} \Vc{x}^r - \A (\Vc{d}^{[-\nicefrac{1}{r}]} \odot \Vc{x})^r$ and, more generally, $\LL \Vc{x}^{r-k} = \T{I} \Vc{x}^{r-1} - \paren{\Vc{d}^{[-\nicefrac{1}{r}]}}^{\otimes k} \odot \A (\Vc{d}^{[-\nicefrac{1}{r}]} \odot \Vc{x})^{r-k},$ where $\odot$ is the appropriate element-wise (Hadamard) tensor product.

We now apply the aforementioned clustering approach to \texttt{cooking} and \texttt{DAWN}. To better reveal clusters, we filter out high-degree nodes that appear in more that 20\% of the hyperedges. We color nodes based on hyperedge type metadata, assigning each node to the majority color of hyperedge it appears in. To  better reveal node colors and speed up the computations, we also filter out hyperedges above size $r=8$. Figures~\ref{fig:tsne-cooking} and~\ref{fig:tsne-DAWN} present t-SNE~\cite{van2008visualizing} plots visualizing embeddings of these datasets.  
For each dataset, the left visualization presents the matrix-based embedding using the normalized Laplacian \cite{chung1997spectral} of the hypergraph's clique expansion graph\footnote{We also used this embedding to initialize the iterative scheme to obtain the tensor embedding.}, while the right shows the normalized Laplacian tensor embedding of the hypergraph.
For both datasets we observe starkly different geometry between the t-SNE representations of the matrix and tensor embeddings, providing qualitative evidence that these two approaches are capturing different features of the hypergraph cluster in practice\footnote{No effort was made to tune the performance of either clustering algorithm or evaluate which clustering is ``better." Indeed, understanding which hypergraph structural features are highlighted by the CP decomposition of $\LL$ is a compelling open question for future work.  In particular, the effectiveness of spectral clustering is explainable in part by the tight connection between Laplacian spectra and the combinatorial properties of the graph captured by the Cheeger inequality.  The authors are unaware of any similar known results for the nonuniform adjacency tensor (Laplacian or otherwise) associated with a hypergraph.}.

\section{Conclusion and future work}\label{sec:conclusion}

We developed a suite of algorithms for performing fundamental tensor operations on the nonuniform hypergraph adjacency tensor. Improving upon approaches that are intractable in time and space complexity, we developed efficient, implicit methods tailored to exploit the nuanced symmetry of the adjacency tensor. We then demonstrated how these algorithms give rise to fundamental tensor-based hypergraph analyses, such as centrality and clustering, which hold promise in capturing hypergraph-native structure over existing matrix-based approaches. Our exploration here is not comprehensive, and many avenues remain for future work. First, we note the hypergraph adjacency tensor we utilized is defined for simple, unweighted, nonuniform hypergraphs. Real data may present multiple hyperedges, weights, vertex multiplicities within a hyperedge, or directionality. Extending our methods to accommodate such cases in a principled manner would be advantageous. Secondly, our application of TTSV algorithms to perform hypergraph analyses is cursory and leaves a number of exciting possibilities to future work: how might one develop multilinear tensor PageRank for nonuniform hypergraphs, or use our algorithms in supervised and semi-supervised machine learning problems, such as node classification and link prediction? Furthermore, it seems plausible that our generating function approach to exploiting symmetry in the hypergraph adjacency tensor could be extended to other tensor operations: for instance, the Tucker decomposition involves repeatedly performing a tensor times same matrix operation \cite{de2000best,jin2022scalable}. Lastly, despite the current approach being tailored to hypergraphs, we believe that generating-function-based tensor algorithms similar to the ones we presented may have utility in general symmetric tensor problems beyond the context of hypergraphs.

\section*{Acknowledgments}
We thank Tammy Kolda, Jiajia Li, and Shruti Shivakumar for helpful discussions on tensor decompositions, and Yosuke Mizutani for his assistance in drawing the hypergraphs visualized in Figure \ref{fig:gram}. We would also like to thank the anonymous referee would pointed out the alternative derivation of the generating function expression for \ttsv[k] based on the relationship $\nabla^k \Tn{X}\Vc{b}^r = \frac{(r-k)!}{r!} \Tn{X} \Vc{b}^{r-k}.$ The authors gratefully acknowledge the funding support from the Applied Mathematics Program within the U.S.\ Department of Energy’s Office of Advanced Scientific Computing Research as part of Scalable Hypergraph Analytics via Random Walk Kernels (SHARWK). Pacific Northwest National Laboratory is operated by Battelle for the DOE under Contract DE-AC05-76RL0-1830. PNNL Information Release PNNL-SA-186918.

\bibliographystyle{siamplain}
\bibliography{main}
\makeatletter\@input{xxs.tex}\makeatother
\end{document}


\maketitle

\section{Extension to \ttsv[2] and beyond}\label{sec:TTSV2}
As noted in the main text, our approach readily extends to tensor-times-same-vector in all but $k$.  To illustrate the necessary changes, here we provide the algorithms and analysis for $k =2$, that is \ttsv[2].   To this end, we first provide the natural generalization of $\phi_1$, the scaling value used in \hyperref[alg:TTSV1U]{\ttsvU[1]}, to higher $k$.

\begin{lemma}\label{lem:comb_TTSV2}
Let $e=\{v_1,\dots,v_k\}$ be a hyperedge of a rank $r$ and $\beta(e)$ and $\kappa(e)$ as defined in Definition $\ref{def:blowup}$. Let $x \in \kappa(e)$ and let $u_1,u_2,\ldots, u_k \in x$ such that $\set{u_1,\ldots,u_k} \subseteq x$ as a multiset.  The number of blowups $i_1,\ldots, i_r \in \beta(e)$ that have support equal to $x$ with $i_j = u_j$ for $1 \leq j \leq k$, is given by the multinomial coefficient
\[ \phi_k(x,u) \coloneqq \binom{r-k}{m_x(v_1) - m_u(v_1), \ldots, m_x(v_k) - m_u(v_k)}, \]
where $m_x(w)$ and $m_u(w)$ denotes the multiplicity of $w$ in $x$ and $u$, respectively.
\end{lemma}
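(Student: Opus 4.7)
The plan is a direct counting argument that reduces the statement to a single application of the standard multiset-permutation identity. First, I would observe that since we are enumerating tuples $(i_1, \ldots, i_r) \in \beta(e)$ with $i_j = u_j$ prescribed for $1 \le j \le k$, the only remaining freedom lies in the choice of the entries $i_{k+1}, \ldots, i_r$.

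Next, I would translate the support condition into a statement about the residual multiset of entries. Requiring that the multiset $\{i_1, \ldots, i_r\}$ equal $x$, together with the fact that the first $k$ positions already contribute the multiset $\{u_1, \ldots, u_k\}$, forces $\{i_{k+1}, \ldots, i_r\}$ to coincide with the multiset difference $x \setminus \{u_1, \ldots, u_k\}$. The hypothesis $\{u_1, \ldots, u_k\} \subseteq x$ as a multiset guarantees that each residual multiplicity $m_x(v_j) - m_u(v_j)$ is nonnegative, so this difference is well-defined. A quick arithmetic check gives $\sum_{j=1}^k (m_x(v_j) - m_u(v_j)) = r - k$, matching the number of open positions.

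Finally, I would invoke the classical formula for the number of distinguishable orderings of a multiset: the count of length-$(r-k)$ tuples whose entry multiplicities at $v_1, \ldots, v_k$ are exactly $m_x(v_1) - m_u(v_1), \ldots, m_x(v_k) - m_u(v_k)$ is precisely the multinomial coefficient $\phi_k(x,u)$. Each such tuple extends, via the fixed first $k$ coordinates, to a unique blowup satisfying all of the stated constraints, and this correspondence is a bijection.

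The main obstacle is conceptual rather than computational: one must carefully parse the multiset inclusion $\{u_1, \ldots, u_k\} \subseteq x$ and the meaning of ``support equal to $x$'' to see that, once the first $k$ coordinates are fixed, the remaining $r-k$ coordinates genuinely constitute an unconstrained multiset-arrangement subproblem. Once this reduction is in place, the multinomial identity concludes the proof without any further calculation.
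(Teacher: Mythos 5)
Your argument is correct and matches the intended justification: the paper states this lemma without proof (as the direct generalization of the $\phi_1$ scaling lemma from the main text), and the omitted reasoning is precisely your reduction---once $i_1,\ldots,i_k$ are pinned to $u_1,\ldots,u_k$, the support condition forces the remaining $r-k$ coordinates to realize the multiset difference $x \setminus \{u_1,\ldots,u_k\}$ (well-defined by the multiset inclusion), and the number of such arrangements is the stated multinomial coefficient. The only small point you could make explicit is that every such completion indeed lies in $\beta(e)$, which follows because $x \in \kappa(e)$ has support exactly $e$.
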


This immediately yields the generalization shown in \hyperref[alg:TTSV2U]{\ttsvU[2]} of the implicit \ttsv\ approach to the case $k =2$. Applying the same observations which yielded \hyperref[alg:TTSV1G]{\ttsvG}  to this algorithm yields the generating function approach \hyperref[alg:TTSV2G]{\ttsvG[2]}.
\begin{center}
\phantomsection\label{alg:TTSV2U}
\begin{algorithm}[H] \small
\LinesNotNumbered 
\KwData{rank $r$ hypergraph $(V,E,w)$, vector $\Vc{b}$}
\KwResult{$\A \Vc{b}^{r-2}=\Mx{Y}$}
\For{$(u,v)\in V^2$ with $u\leq v$}{
$c \gets 0$\;

\For{$e\in E(u,v)$}{
\For{$x \in \kappa(e)$}{
\If{$u = v$ \emph{and} $m_x(u)=1$}{
\Continue
}
$c \pluseq \displaystyle w_e \frac{\phi_2(x,u,v)}{\Vc{b}_u\Vc{b}_v}\cdot \prod\limits_{u \in x}\Vc{b}_u$\;}
}
$\Mx{Y}_{uv} \gets c$\;
}
\Return{$\Mx{Y}+\Mx{Y}^T-\mathrm{diag}(\Mx{Y})$}
\caption*{\ttsvU[2]: TTSV2 via unordered blowups}
\end{algorithm}
\end{center}

\begin{center}
    \begin{minipage}[t]{0.9\linewidth}
    \phantomsection\label{alg:TTSV2G}
\begin{algorithm}[H] \small
\LinesNotNumbered 
\KwData{rank $r$ hypergraph $(V,E)$, vector $\Vc{b}$}
\KwResult{$\A \Vc{b}^{r-2}=\Mx{Y}$}
\For{$(u,v)\in V^2$ with $u\leq v$}{
$c \gets 0$\;

\For{$e\in E(u,v)$}{
\If{$u = v$}{
$c\pluseq \displaystyle w_e (r-2)! \paren{\exp({\Vc{b}_ut}) \prod_{ \substack{ x \in e \\ x \neq u}}(\exp({\Vc{b}_ut}) -1)}[t^{r-2}]$\;
}
\Else{
$c\pluseq \displaystyle w_e (r-2)! \paren{\exp({(\Vc{b}_u + \Vc{b}_v)t}) \prod_{ \substack{ x \in e \\ u \neq u,v}}(\exp({\Vc{b}_ut}) -1)}[t^{r-2}]$\;
}
$\Mx{Y}_{uv} \gets  c$\;
}
}
\Return{$\Mx{Y}+\Mx{Y}^T-\mathrm{diag}(\Mx{Y})$}
\caption*{\ttsvG[2]: \ttsv[2] via generating functions}
\end{algorithm}
\end{minipage}
\end{center}

\begin{proposition}
Let $H$ be a rank $r$ hypergraph with $m$ edges, let $\delta = \frac{\Vol(H)}{m}$ be the average edge size, let $\epsilon = \min\set{
\frac{\delta}{r}, 1- \frac{\delta}{r}}$, amd let $k^* = \log_2(r) + \log_2 \log_2(r)$. \hyperref[alg:TTSV2U]{\ttsvU[2]} runs in time at least 
\[ \bigOmega{r^2 \Vol(H)} \quad \textrm{and at most} \quad \bigOh{\epsilon m r^{\nicefrac{5}{2}}2^r}.\]
Further, if there is some constant $\tau$ such that $2 + \tau < \delta < (1-\tau)r$, then  \hyperref[alg:TTSV2G]{\ttsvG[2]} runs in time at least 
\[\begin{cases} \bigOmega{\Vol(H)\delta 2^{\delta}\log_2(r)} & \delta < \frac{1}{2}k^* \\ \bigOmega{\Vol(H)\delta 2^{(1-\lilOh{1})\delta}\log_2(r)} & \frac{1}{2} k^* \leq \delta \leq k^*  \\
\bigOmega{\Vol(H)\delta^2 r\log_2(r)} & k^* < \delta
\end{cases} \]
and at most
\[\bigOh{r^{3}\log_2(r)\Vol(H)} .\]
\end{proposition}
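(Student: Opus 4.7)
My plan is to perform a direct operation count on the nested loops of each algorithm. The central combinatorial input is that, for a hyperedge $e$ of size $s_e$, the number of unordered blowups is $\abs{\kappa(e)}=\binom{r-1}{s_e-1}$, while the number of pairs $(u,v)\in e$ with $u\leq v$ (including $u=v$) is $\binom{s_e+1}{2}$; the rest of the proof is to sum per-edge work against these counts and apply either entropy/convexity bounds (for the uniform statements) or a case split on the average edge size $\delta$ (for the generating-function algorithm).

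For \hyperref[alg:TTSV2U]{\ttsvU[2]}, each inner iteration performs a constant-work accumulation together with the $\bigTheta{r}$-cost product $\prod_{u\in x}\Vc{b}_u$, so the total cost is
\[\bigTheta{\sum_{e\in E}\binom{s_e+1}{2}\binom{r-1}{s_e-1}r}.\]
For the lower bound I would combine $\binom{s_e+1}{2}\geq s_e^2/2$ with the elementary inequality $\binom{r-1}{s-1}\geq r/s$ (valid for $2\leq s\leq r$) to obtain per-edge cost $\bigOmega{s_e r^2}$, summing to $\bigOmega{r^2\Vol(H)}$. For the upper bound I would apply $\binom{s_e+1}{2}\leq \bigOh{r^2}$ together with a Stirling/entropy bound $\binom{r-1}{s_e-1}\leq \bigOh{2^r/\sqrt{r}}$, with the additional $\epsilon$ factor captured by restricting $s_e$ away from $r/2$, to recover $\bigOh{\epsilon m r^{\nicefrac{5}{2}}2^r}$.

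For \hyperref[alg:TTSV2G]{\ttsvG[2]} the per-pair cost is that of extracting $[t^{r-2}]$ from a product of roughly $s_e$ polynomial factors of degree at most $r-2$. Using FFT-based multiplication this is $\bigOh{s_e r\log_2(r)}$ per pair; combining with $\binom{s_e+1}{2}=\bigOh{s_e^2}$ pairs per edge and $s_e\leq r$, summing over edges gives the upper bound $\bigOh{r^3\log_2(r)\Vol(H)}$.

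The principal obstacle is the three-case lower bound for \hyperref[alg:TTSV2G]{\ttsvG[2]}. I would argue it by identifying, for each regime of $\delta$, the cheapest admissible method for forming the truncated product: either direct subset expansion
\[\prod_{x\in e}(\exp(\Vc{b}_xt)-1)=\sum_{S\subseteq e}(-1)^{s_e-\abs{S}}\exp\!\paren{t\sum_{x\in S}\Vc{b}_x}\]
(cost $\bigTheta{2^{s_e}\log_2(r)}$ per pair, the $\log_2(r)$ from computing the needed power of the interior linear form) or FFT-based polynomial multiplication (cost $\bigTheta{s_er\log_2(r)}$ per pair). The crossover lies at $2^{s_e}\approx r\log_2(r)$, i.e.\ at $s_e\approx k^*=\log_2(r)+\log_2\log_2(r)$. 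For $\delta<k^*/2$ subset expansion dominates and yields $\bigOmega{2^\delta\log_2(r)}$ per pair; for $k^*/2\leq\delta\leq k^*$ the degree-$(r-2)$ truncation trims only a $(1-\lilOh{1})$ fraction of the subsets, giving $\bigOmega{2^{(1-\lilOh{1})\delta}\log_2(r)}$; and for $\delta>k^*$ the FFT dominates and gives $\bigOmega{\delta r\log_2(r)}$. Under the hypothesis $2+\tau<\delta<(1-\tau)r$ the edge-size distribution concentrates near $\delta$, allowing these per-pair bounds to transfer to the full hypergraph; multiplying by $\bigTheta{\delta^2}$ pairs per edge and $\Vol(H)/\delta$ edges then produces the three stated expressions.
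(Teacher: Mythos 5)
Your operation count for \hyperref[alg:TTSV2U]{\ttsvU[2]} matches the paper's: an edge of size $s$ contributes $\binom{s+1}{2}$ pairs, $\binom{r-1}{s-1}$ unordered blowups, and $\bigTheta{r}$ work per inner iteration, and your per-edge bound $\bigOmega{s r^2}$ via $\binom{r-1}{s-1}\geq r/s$ gives the $\bigOmega{r^2\Vol(H)}$ lower bound directly (the paper instead exhibits the extremal mix of size-$2$ and size-$r$ edges; your route is, if anything, cleaner since it holds edge by edge). Your FFT accounting for the $\bigOh{r^{3}\log_2(r)\Vol(H)}$ upper bound on \hyperref[alg:TTSV2G]{\ttsvG[2]} is also fine. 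However, for the Unord upper bound you never actually produce the $\epsilon$ factor: ``restricting $s_e$ away from $r/2$'' is not an argument. What is needed (and what the main-text argument the paper defers to, \cref{P:TTSV}, supplies) is to use the constraint $\sum_e s_e = \delta m$ (resp.\ $\sum_e (r-s_e) = (1-\nicefrac{\delta}{r})rm$) to show that only $\bigOh{\epsilon m}$ edges can have sizes in the central range where $\binom{r}{s}$ is within a polynomial factor of $2^r$, all remaining edges being exponentially cheaper.

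The genuine gap is the three-case lower bound for \hyperref[alg:TTSV2G]{\ttsvG[2]}. A lower bound must hold for \emph{every} hypergraph with average edge size $\delta$, and the hypothesis $2+\tau<\delta<(1-\tau)r$ constrains only the mean: it implies nothing about concentration of the edge-size distribution near $\delta$ (half the edges could have size $2$ and the rest be much larger), so you cannot price every edge as if it had size $\delta$ and multiply by $\Vol(H)/\delta$. The argument the statement reflects — and which the paper defers to via \cref{P:TTSVgen} — is extremal: one minimizes $\sum_e g(s_e)$ subject to $\sum_e s_e=\Vol(H)$, where $g(s)$ is roughly $\binom{s+1}{2}\min\set{2^{s}\log_2(r),\, s\,r\log_2(r)}$, the cost of the cheaper of the subset-expansion and FFT evaluations. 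Because this minimum is not convex across the crossover at $s\approx k^*$, the cost-minimizing edge-size distribution for $\frac{1}{2}k^*\leq\delta\leq k^*$ splits its mass across the crossover; this non-convexity is what degrades the exponent to $(1-\lilOh{1})\delta$ and is precisely where the boundary $\frac{1}{2}k^*$ comes from. Your alternative explanation — that the $[t^{r-2}]$ truncation ``trims a $(1-\lilOh{1})$ fraction of the subsets'' — cannot be right: in that regime $\delta\leq k^*=\log_2(r)+\log_2\log_2(r)\ll r$, so truncation at degree $r-2$ discards nothing from a $2^{s}$-term expansion over edges of such sizes, and your case boundaries are asserted to match the statement rather than derived. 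Without the convexity/two-point-mixture analysis, none of the three cases is actually established against adversarial edge-size distributions, and the middle case in particular has no supporting argument.
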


\begin{proof}
We first consider \hyperref[alg:TTSV2U]{\ttsvU[2]} and observe that this algorithm iterates over all subsets of size at most 2 in an edge, and thus we have that the per-edge run time is given by 
\[ r \paren{ \binom{\size{e}}{2} + \size{e}} \binom{r-1}{\size{e} - 1} = \frac{\size{e}^2 + \size{e}}{2} \size{e} \binom{r}{\size{e}}.\]
With minor adjustments, the upper bound argument for \hyperref[alg:TTSV1U]{\ttsvU} presented in \cref{P:TTSV} can be used to provide the stated upper bound on the runtime.

To provide the lower bound for \hyperref[alg:TTSV2U]{\ttsvU[2]}, we note that the run time for an edge of size 2 is $3r(r-1)$, of size 3 is $3r(r-1)(r-2)$, and of size $r$ is $\frac{r^3+r^2}{2}$, while all other edge sizes run in time $\bigOmega{r^4}$.  
Thus, the minimal total run time is achieved by maximizing the number of edges of size two while achieving the desired volume by including edges of size $r$.  This results in a total run time 
\[ \frac{rm - \Vol(H)}{r-2} 3r(r-1) + \frac{\Vol(H) - 2m}{r-2} \frac{r^3+r^2 }{2} = 2r^2m + \frac{r(r-3)}{2}\Vol(H) \]
Thus, the overall lower bound on the runtime of \hyperref[alg:TTSV2U]{\ttsvU[2]} is $\bigOmega{r^2 \Vol(H)}$,
as desired.

The analysis for \hyperref[alg:TTSV2G]{\ttsvG[2]} follows similar lines as \cref{P:TTSVgen}.
\end{proof}

For \ttsv[2], we observe empirical runtime performance similar to that for \ttsv[1], which is presented in~\cref{fig:ttsv2-times}. In particular, the two baselines (explicit and ordered) largely time out or force an out-of-memory error for low $r$ values, while the generating function approach drastically outperforms both the baselines and unordered approach. For example, on \texttt{mathoverflow} the generating function approach outperforms the unordered approach by about an order of magnitude at $r = 8$, and that increases to over two orders of magnitude at $r=16$. The notable exception is \texttt{amazon-reviews}, on which the unordered approach slightly outperforms the generating function approach for $r = 4$.
\begin{figure}
    \centering
    \includegraphics[width=0.99\linewidth]{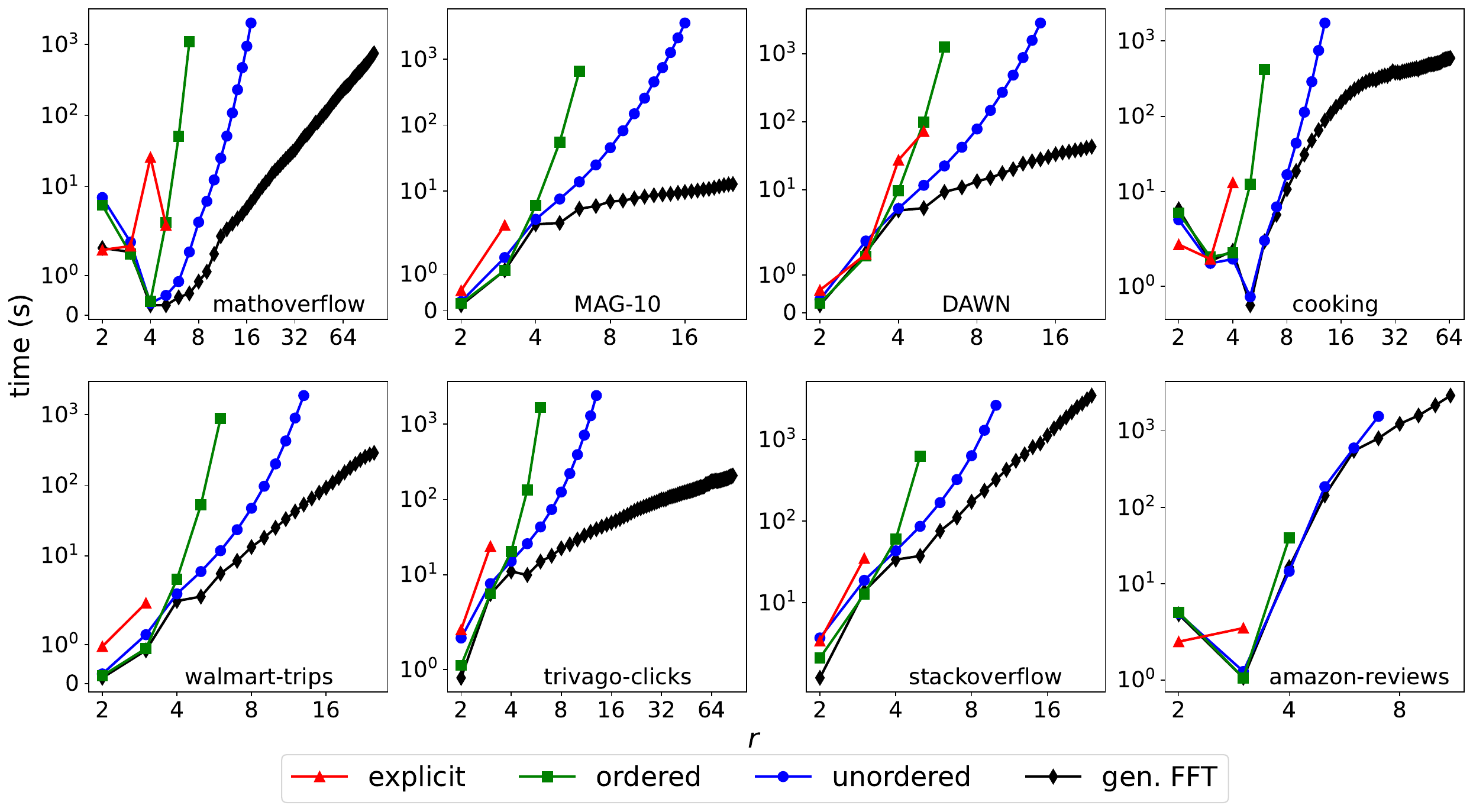}
    \caption{
    Runtimes of TTSV2 algorithms.
    }
    \label{fig:ttsv2-times}
\end{figure}

\section{Numerical considerations}\label{sec:numerical}
\begin{figure}
    \centering
    \includegraphics[width=0.60\linewidth]{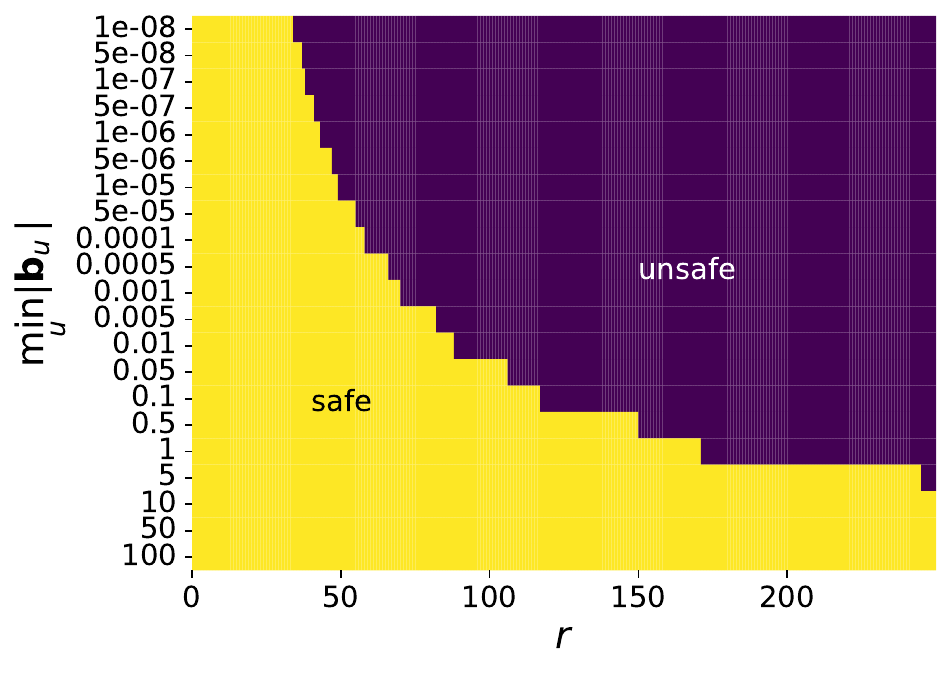}
    \caption{Safe and unsafe parameter regimes for TTSV1 and TTSV2 algorithms.}
    \label{fig:safe-float}
\end{figure}
Here, we address the issue of potential roundoff error in the generating function approach to \ttsv calculations. Both algorithms compute coefficients of the form $\Vc{b}_i^k/k!$, where $k\leq r$. For large $r$ and $k$ close to $r$, and small vector component $\Vc{b}_i$, we must be careful to avoid roundoff errors. In particular, when running our algorithms the user should ascertain that the values of these coefficients will not reach below the smallest (in magnitude) non-subnormal value representable in their system. While this value is system-dependent, in~\cref{fig:safe-float} we present a plot of the ranges of $\min_u\Vc{b}_u$ and $r$ for which the coefficient $\min_u\Vc{b}_u^r/r!$ safely exceeds (yellow region) the value for our system (which happens to be around $2\times 10 ^{-308}$). For example, $\min_u\Vc{b}_u = 0.05, r = 100$ safely falls in the yellow region -- and this corresponds to the extremal setting used in our timing experiments presented in \cref{tab:allexp1}.

\section{Case study: food and drugs}\label{sec:case-study}
We present further case study on how choice of $r$ greatly affects the ranking of top nodes. In Table~\ref{tab:cooking_top_ranked}, we observe the rankings within \texttt{cooking} change drastically both across the three centrality measures, and across the two filterings. This suggests the centrality measures present complementary results, and incorporating larger hyperedges leads to fundamentally different rankings. For instance, we note the top-ranked ingredients for all three centralities transition from what looks like a 3-ingredient recipes for foundational cuisine elements such as corn tortillas (water, salt, masa harina), dashi (water, konbu, dried bonito flakes), and rice (water, salt, long-grain rice) to something resembling a savory pastry recipe that requires more ingredients (all-purpose flour, salt, olive oil, water, butter, etc.). While the analogous changes in ranking across filtering parameter and centrality are not as apparent in \texttt{DAWN}, each centrality still induces a different ranking, and new substances such as amphetamine make their way to the list for $r=22.$
\clearpage

\begin{table}[ht!]
    \centering
    \scalebox{0.65}{
    \begin{tabular}{l| c c c| c c c }
    \toprule
    \multicolumn{4}{c|}{ $r = 3$} & \multicolumn{3}{c}{ $r = r_{\max} = 65$}\\
    \midrule
    \textit{rank} & CEC & ZEC & HEC  & CEC & ZEC & HEC\\
    \midrule
    1& water                & water               & water                   & salt                 & salt              & salt                   \\
    2& salt                 & salt                & salt                    & onions                 & olive oil                            & olive oil       \\
    3& sugar                & masa harina         & sugar                   & olive oil                 & onions                            & onions                   \\
    4& butter               & peanuts             & peanuts                 & garlic     & water         & garlic                   \\
    5& peanuts              & polenta             & masa harina             & water                & garlic                    & water                 \\
    6& lemon                & white rice          & butter                  & garlic cloves           & sugar                           & sugar                   \\
    7& masa harina          & flour               & polenta                 & pepper               & garlic cloves                             & all-purpose flour                   \\
    8& kosher salt          & long-grain rice     & konbu                   & sugar                 & pepper                  & garlic cloves        \\
    9& konbu                & butter              & dried bonito flakes     & ground black pepper                   & all-purpose flour                            & pepper               \\
    10& polenta             & yellow corn meal    & kosher salt             & all-purpose flour                    & butter                         & butter                  \\
    \bottomrule
    \end{tabular}
    }
\newline
\vspace*{0.5 cm}
\newline
\hspace*{-0.8cm}
    \scalebox{0.5}{
    \begin{tabular}{l| c c c| c c c }
    \toprule
    \multicolumn{4}{c|}{ $r = 3$} & \multicolumn{3}{c}{ $r = r_{\max} = 22$}\\
    \midrule
    \textit{rank} & CEC & ZEC & HEC  & CEC & ZEC & HEC\\
    \midrule
    1& alcohol (ethanol)             & alcohol (ethanol)                & alcohol (ethanol)                  & alcohol (ethanol)             & alcohol (ethanol)                & alcohol (ethanol)       \\
    2& cocaine                       & cocaine                          & cocaine                            & cocaine                       & cocaine                          & cocaine                 \\
    3& marijuana                     & marijuana                        & marijuana                          & marijuana                     & marijuana                        & marijuana               \\
    4& heroin                        & heroin                           & heroin                             & heroin                        & heroin                           & heroin                  \\
    5& alprazolam                    & methamphetamine                  & methamphetamine                    & benzodiazepines-nos           & narcotic analgesics-nos          & benzodiazepines-nos     \\
    6& narcotic analgesics-nos       & drug unknown                     & drug unknown                       & alprazolam                    & benzodiazepines-nos              & narcotic analgesics-nos\\
    7& benzodiazepines-nos           & narcotic analgesics-nos          & narcotic analgesics-nos            & narcotic analgesics-nos & methamphetamine & methamphetamine\\
    8& methamphetamine               & benzodiazepines-nos              & benzodiazepines-nos                & methamphetamine & alprazolam & alprazolam\\
    9& drug unknown                  & alprazolam                       & alprazolam                         & methadone & drug unknown & drug unknown\\
    10& methadone                    & methadone                        & methadone                          & drug unknown & amphetamine & methadone\\
    \bottomrule
    \end{tabular}
    }
    \caption{Top ranked ingredients in \texttt{cooking} (top) and drugs in \texttt{DAWN} (bottom) for $r =3,r_{\max}$ across CEC, ZEC, and HEC.}
    \label{tab:cooking_top_ranked}
\end{table}




\makeatletter\@input{xx.tex}\makeatother